\newenvironment{enumeratei}{\begin{enumerate}[\upshape (i)]}%
                            {\end{enumerate}}
\theoremstyle{plain}
 \newtheorem{theorem}{Theorem}[section]
 \newtheorem{lemma}[theorem]{Lemma}
\theoremstyle{definition}
  \newtheorem{notation}[theorem]{Notation}
  \newtheorem{remark}[theorem]{Remark}
\renewcommand \phi{\varphi}
\newcommand\set[1]{\{#1\}}
\newcommand \length [1] {\textup{length}(#1)}
\newcommand \skel [1] {S(#1)} 
\newcommand \upskel [1] {S^{+}(#1)} 
\newcommand \dskel [1] {S^{\scriptscriptstyle{\textup{wd}}}(#1)}
\newcommand \wt  {{\sf w}}
\newcommand \wpr  {{\sf w}'}
\newcommand \wof [2]  {{\wt}(#1,#2)}
\newcommand \wprof [2]  {{\wpr}(#1,#2)}
\newcommand \wxt  {{\sf w}^\ast}
\newcommand \wxof [2]  {{\wxt}(#1,#2)}
\newcommand \wxofprime [2]  {{\wxt}'(#1,#2)}
\newcommand \Bigwxof [2]  {{\wxt}\Bigl(#1,#2\Bigr)}
\newcommand \Bigwxofprime [2]  {{\wxt}'\Bigl(#1,#2\Bigr)}
\newcommand \At [1] {{\sf{ At}} (#1)}
\newcommand \Ji [1] {{\sf{ Ji}} (#1)}
\newcommand \Jki [2] {{\sf{ Ji}}_{#1} (#2)}
\newcommand \Jtop [1]{{ \sf{Ji}^{+}(#1)}}
\newcommand \domin [1] {\textup{domin}(#1)}
\newcommand \gd [1] {d_{#1}}
\newcommand \gs [1] {s_{#1}}
\newcommand \gee [1] {e_{#1}}
\newcommand \nopdstar[1] {d^{\sharp}_{#1}}
\newcommand \dstar [2] {\nopdstar{#1}\kern-1pt(#2)}
\newcommand \nopsstar[1] {s^{\sharp}_{#1}}
\newcommand \sstar [2] {\nopsstar{#1}\kern-1pt(#2)}
\newcommand \nopestar[1] {e^{\sharp}_{#1}}
\newcommand \estar [2] {\nopestar{#1}\kern-1pt(#2)}
\newcommand \pinf [1] {P^{\infty}\kern-1pt(#1)}
\newcommand \ekval [1] {\mathrel{\mathord=^{#1} }}
\newcommand \clss [1] {{\mathcal{ #1}}}
\begin{document}

\title
{Distributive lattices determined by weighted double skeletons}

\author[G.\ Cz\'edli]{G\'abor Cz\'edli}
\email{czedli@math.u-szeged.hu}
\urladdr{http://www.math.u-szeged.hu/~czedli/}
\address{University of Szeged, HUNGARY 6720}
\author[J. Grygiel]{Joanna Grygiel}
\email{j.grygiel@ajd.czest.pl}
\urladdr{http://www.j.grygiel.eu/}
\address{Institute of Mathematics and Computer Science, 
Jan D{\l}ugosz University of Cz\c{e}stochowa, Poland%
}
\author[K. Grygiel]{Katarzyna  Grygiel}
\email{grygiel@tcs.uj.edu.pl}   
\urladdr{http://tcs.uj.edu.pl/Grygiel}
\address{
Theoretical Computer Science Department, 
Faculty of Mathematics and Computer Science, 
Jagiellonian University,
ul. Prof. {\L}ojasiewicza 6, 30-348 Krak\'ow, Poland}

\subjclass[2000]{Primary 06B99, Secondary 06C15, 06D05}
\keywords{$S$-glued sum, skeleton of a lattice, double skeleton, weighted double skeleton, glued tolerance, lattice tolerance, Herrmann rank}

\date{October 27, 2011; revised August 21, 2012; September 5, 2012; October 12, 2012.}

\begin{abstract} Related to his $S$-glued sum construction, the skeleton $\skel L$ of a finite lattice $L$ was introduced by C.\ Herrmann in 1973. Our theorem asserts that if $D$
 is a finite  distributive lattice  and its second skeleton, $\skel{\skel D}$, is the trivial lattice, then  $D$ is characterized 
by its weighted double skeleton, introduced by the second author in 2006. 
The assumption on the second skeleton is essential.
\end{abstract}

\thanks{This research of the first author was supported by the NFSR of Hungary (OTKA), grant numbers   K77432 and K83219. The second author was supported by the NSC of Poland, grant number 2011/01/B/HS1/00944. The third author was supported by the NSC of Poland, grant number 2011/01/B/HS1/00944, and by the Polish Ministry of Science and Higher Education, grant number NN206 376137.}

\maketitle

\section{Introduction} Let $L$ be a finite modular lattice. Then, according to Herrmann~\cite{herrmann}, $L$ is the union of its maximal complemented (equivalently, atomistic) intervals, which are glued together 
along a lattice $S=\skel L$, the \emph{skeleton} of $L$. 
His construction of $S(L)$ makes sense even without  modularity, so we drop this assumption until otherwise stated.
It appeared somewhat later that $\skel L$ is a factor lattice of $L$ by a tolerance relation in the sense of the first author \cite{czedli}. Define $S^0(L):=L$ and $S^{i+1}(L):= S\bigl(S^i(L)\bigr)$. Then there is a smallest $n$ such that $|S^n(L)|=1$, which we will call the \emph{Herrmann rank of $L$}. We say that $L$ is 
\emph{$H^n$-irreducible} if its Herrmann rank  is at most $n$. Equivalently, $L$ is $H^n$-irreducible if{f} $|S^n(L)|=1$. 
$H^1$-irreducibility was previously called  $H$-irreducibility  by the second author in the monograph \cite{grygiel_mono} and in many of her papers, including \cite{grygiel_mono} and \cite{grygiel_double}. 

The skeleton of $L$ does not tell too much on $L$. Indeed, the second author \cite[Corollary 3.2.6]{grygiel_mono} proved that each finite lattice $S$ is the skeleton of infinitely many pairwise non-isomorphic finite distributive lattices. The \emph{weighted double skeleton} $\dskel L$ of $L$, introduced by the second author in 
\cite{grygiel_double} and to be  defined in  the present paper soon, carries much more information on the initial lattice. 

Let $\clss K$ be a class of finite distributive lattices, and let $L\in \clss K$. If for any $L'\in \clss K$ such that  $\dskel {L'}$ is isomorphic to $\dskel{L}$
the lattice $L'$ is isomorphic to $L$, then  we say that $L$ is \emph{determined} by its weighted double skeleton \emph{in the class} $\clss K$.

As usual,  the partially ordered set (in short, the poset, in other words, the order) of all non-zero join-irreducible elements of $L$ is denoted by   $\Ji D=(\Ji D,\leq)$. 
The sets $\set{1,2,3,\dots}$ and $\set{0,1,2,3,\dots}$ are denoted by $\mathbb N$ and $\mathbb N_0$, respectively. 
The \emph{length} of a finite poset $Q$ is
$\length Q:=\max\{n\in\mathbb N_0: Q$ has an $n+1$-element chain$\}$. A \emph{nontrivial lattice} is a lattice that has at least two elements.
Postponing the rest of definitions to the next section, our main result reads as follows. 

\begin{theorem}\label{thmmain} 
Let $L$ be a  finite nontrivial  lattice. 
\begin{enumeratei}
\item\label{thmmaina} If $L$ is modular and $H^2$-irreducible, then $\length{\Ji L}\leq 1$.
\item\label{thmmainb} If $L$ is distributive and $H^2$-irreducible, then  $L$ is  determined  by its weigh\-ted double skeleton in the class of finite distributive lattices. 
\item\label{thmmainc} If $L$ is distributive and  $\length{\Ji L}\leq 1$, then $L$ is  determined  by its weighted double skeleton in the class of finite distributive lattices $D$ satisfying the inequality $\length{\Ji D}\leq 1$. 
\end{enumeratei}
\end{theorem}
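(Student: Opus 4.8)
The plan is to treat part~(i) as a structural statement about skeletons, to deduce part~(ii) from parts~(i) and~(iii), and to regard part~(iii) as the genuine reconstruction problem. The common thread is a description of how the skeleton operator acts on the poset $\Ji L$ of join-irreducibles.

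For part~(i) the first step is the base observation that, for a finite modular $L$, triviality of $\skel{\skel L}$ forces $\skel L$ to be $H^1$-irreducible, i.e.\ to be a single maximal complemented interval; by Herrmann's theorem such a lattice is atomistic, so its join-irreducibles are exactly its atoms and $\length{\Ji{\skel L}}=0$. Hence part~(i) reduces to the inequality $\length{\Ji L}\le\length{\Ji{\skel L}}+1$. To prove this I would show that a chain $p_0<\dots<p_k$ in $\Ji L$ projects, under the canonical tolerance quotient $L\to\skel L$, to a chain of length at least $k-1$ in $\Ji{\skel L}$: consecutive join-irreducibles lying in a common block of the glued tolerance get collapsed, but at most one such collapse can occur per skeletal level, so the vertical extent measured by $\Ji$ drops by at most one. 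In the distributive case this becomes transparent, since there $\skel{O(P)}$ is the lattice of maximal antichains of $P=\Ji L$ and one checks directly that its join-irreducible poset has length $\length P-1$; the modular case is handled through Herrmann's gluing, where controlling join-irreducibility across the quotient is the technical core.

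For part~(ii), let $D$ be distributive and $H^2$-irreducible and let $D'$ be any finite distributive lattice with $\dskel{D'}\cong\dskel D$. The key point is that the weighted double skeleton refines the skeleton, so $\dskel{D'}\cong\dskel D$ yields $\skel{D'}\cong\skel D$ and therefore $\skel{\skel{D'}}\cong\skel{\skel D}$; thus $\skel{\skel{D'}}$ is trivial and $D'$ is $H^2$-irreducible as well. By part~(i), applied to both $D$ and $D'$ (distributive lattices are modular), we get $\length{\Ji D}\le1$ and $\length{\Ji{D'}}\le1$, so both lattices lie in the restricted class of part~(iii). Invoking part~(iii) then gives $D'\cong D$, which is exactly the assertion of part~(ii).

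Part~(iii) is the main obstacle. Here $P=\Ji D$ has height at most one, so it is a bipartite poset determined up to isomorphism by the number of its minimal elements, the number of its maximal (non-minimal) elements, and the bipartite covering relation between them; reconstructing $D\cong O(P)$ amounts to recovering this data from $\dskel D$. The skeleton $\skel D$ alone cannot suffice, since each finite lattice is the skeleton of infinitely many distributive lattices, so the argument must exploit the weights and the ``double'' part of the structure. I would first read off $\skel D$ as the lattice of maximal antichains of $P$, then interpret each weight as the size (respectively multiplicity) of the corresponding block, thereby fixing the number of join-irreducibles on each level, and finally use the overlap-recording second component of $\dskel D$ to recover which minimal elements lie below which maximal ones. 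The delicate step is to verify that this data is \emph{complete}, i.e.\ that no two non-isomorphic bipartite posets of height at most one produce isomorphic weighted double skeletons; establishing this rigidity is where the bulk of the work lies.
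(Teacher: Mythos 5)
Your reduction of part~\eqref{thmmaina} rests on a false intermediate claim. You argue that $H^2$-irreducibility of $L$ (triviality of $\skel{\skel L}$) forces $\skel L$ to be atomistic ``by Herrmann's theorem'', so that $\length{\Ji{\skel L}}=0$. But Herrmann's atomisticity statement concerns \emph{modular} lattices (their blocks), and $\skel L$ need not be modular even when $L$ is distributive: by the result of Grygiel cited in the paper, \emph{every} finite lattice --- in particular the pentagon $N_5=\set{0,a,b,c,1}$ with $0<a<b<1$, $0<c<1$ --- is the skeleton of some finite distributive lattice $L$. Now $N_5$ has trivial skeleton: its skeleton tolerance contains all covering pairs, hence $(0,a)\vee(0,c)=(0,1)$ lies in it, and from $(0,1)$ one generates all of $N_5^2$. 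So this $L$ is $H^2$-irreducible, yet $\skel L\cong N_5$ is not atomistic and $\length{\Ji{N_5}}=1$. Thus your key claim $\length{\Ji{\skel L}}=0$ fails, and your inequality $\length{\Ji L}\le\length{\Ji{\skel L}}+1$ --- itself unproved, since the tolerance quotient $L\to\skel L$ is not a map (blocks overlap) and ``at most one collapse per skeletal level'' is never substantiated --- would only give the useless bound $\length{\Ji L}\le 2$. Worse, your auxiliary claim that for distributive $L$ the poset $\Ji{\skel L}$ has length $\length{\Ji L}-1$ contradicts the paper's own Remark~\ref{notforrankthree}: it would force every distributive $L$ with $\length{\Ji L}\le 1$ to have atomistic skeleton, hence trivial second skeleton (in a finite atomistic lattice the covering pairs $(0,a)$, $a$ an atom, join to $(0,1)$, so the skeleton tolerance is total), i.e.\ to be $H^2$-irreducible --- which Figure~\ref{figthree} refutes. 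The paper's proof of \eqref{thmmaina} never looks at $\Ji{\skel L}$: it uses $H^2$-irreducibility only in the form ``$\Theta(\skel L)$ is the total relation'' and then derives a contradiction from a three-element chain in $\Ji L$ via Lemmas~\ref{precbloists}, \ref{ifTcLabT} and \ref{irrPar} (join-irreducibles lying in distinct blocks other than $\zeta_0$ are incomparable).

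Part~\eqref{thmmainc} is where you stop short rather than err: you name plausible data to extract from $\dskel D$, but you explicitly defer the crux (``establishing this rigidity is where the bulk of the work lies''). That rigidity is precisely the content of the paper's substantive lemmas: Lemma~\ref{nojiralpha}, a Reuter/M\"obius counting formula showing that the weights determine $|J_\alpha|$ for every block $\alpha$; Lemma~\ref{iSfRg}, converting sets of dominated atoms into weights; and above all Lemma~\ref{bpgraphl}, which shows that two finite bipartite graphs are isomorphic as soon as some bijection between their upper vertex sets preserves the domination function $V\mapsto\bigl|\set{x: (v,x)\in E\text{ for some }v\in V}\bigr|$, proved by an inclusion--exclusion passage through ``strong'' and ``exact'' domination functions. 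None of this appears in your proposal, so part~\eqref{thmmainc} --- and with it part~\eqref{thmmainb}, whose derivation from \eqref{thmmaina} and \eqref{thmmainc} is correct and does match the structure of the paper's argument --- remains unproved.
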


Notice that $\dskel L$ determines some properties of a modular $L$ even if $H^2$-irreducibility is not assumed.  Namely, $\dskel L$ clearly determines $\length L$, and see Lemma~\ref{nojiralpha} for further properties in the distributive case. 
However, we will soon prove
the following remark, which  indicates that
Theorem~\ref{thmmain} is optimal in some sense.

\begin{remark}\label{notforrankthree}
There exist $H^3$-irreducible  finite distributive lattices $L_1$ and $L_2$ such that 
$\dskel{L_1}$ is isomorphic to $\dskel{L_2}$ but $L_1$ is not isomorphic to $L_2$.
Also, there is a finite distributive lattice $L$ such that $\length{\Ji L}\leq 1$ but $L$ is not $H^2$-irreducible.
\end{remark}

A well-known economic way of describing a finite distributive lattice $D$ by a little amount of  data is to consider  $\Ji D=(\Ji D,\leq)$.  The next remark outlines a more economic way for certain distributive lattices.

\begin{remark}\label{RmRkxg} Let $D$ be a finite distributive lattice with  $\length{\Ji D}\leq 1$. Assume that  $D$ is the union of few maximal boolean intervals but $|D|$ is large. Then $\dskel D$ constitutes an economic description of $D$. 
\end{remark}

\section{Basic concepts and  statements}
For the basic concepts of Lattice Theory the reader is referred to Gr\"atzer~\cite{ggLT}. By a \emph{tolerance} of a lattice $L$ we mean a reflexive, symmetric, compatible relation of $L$. Equivalently, a tolerance of $L$ is the image of a congruence 
by a surjective lattice homomorphism onto $L$, see the first author and Gr\"atzer~\cite{czggg}. 
Let $R$ be a tolerance of $L$. If $X\subseteq L$ is a maximal subset with respect to the property $X\times X\subseteq R$, then $X$ is called a \emph{block} of $R$.  Blocks are convex sublattices by  Bandelt~\cite{bandelt} and Chajda~\cite{chajdabook}. Let $\alpha$ and $\beta$ be blocks of $R$. As it follows immediately from Zorn's Lemma, there are blocks $\gamma$ and $\delta$ of $R$ such that 
\begin{equation}\label{joinmeet}
\begin{aligned}
\set{x\vee y:x\in\alpha,\,\, y\in \beta}\subseteq \gamma=: \alpha\vee\beta,\cr
\set{x\wedge y:x\in\alpha,\,\, y\in \beta}\subseteq \delta=:\alpha\wedge\beta\text.
\end{aligned}
\end{equation}
The first author \cite{czedli} proved that $\gamma$ and $\delta$ are uniquely determined, and the set $L/R$ of all blocks of $R$ with the join and meet defined by \eqref{joinmeet} is a lattice. 
This lattice, also denoted by $L/R$, is called the \emph{factor lattice} (or quotient lattice) of $L$ modulo $R$. 
Notice that there is an alternative way, which does not rely on the axiom of choice (and, therefore, on Zorn's Lemma), to define $L/R$ in an order-theoretic way and to prove that it is a lattice, see  Gr\"atzer and Wenzel~\cite{ggwenzel}.

In the rest of the paper, all lattices will be assumed to be finite. Then the blocks of a tolerance $R$ are intervals. So if $\alpha$ is a block of $R$, then $\alpha$ equals the interval $[0_\alpha,1_\alpha]$ of $L$. It was proved in \cite{czedli} and \cite{czkluk} that, for all $\alpha,\beta\in L/R$,
\begin{equation}\label{blockscompute}
\begin{aligned}
&0_\alpha\vee 0_\beta = 0_{\alpha\vee\beta},\quad  
1_\alpha\vee 1_\beta \leq  1_{\alpha\vee\beta},\cr  
&1_\alpha\wedge 1_\beta = 1_{\alpha\wedge\beta},\quad 
0_\alpha\wedge 0_\beta \geq 0_{\alpha\wedge\beta},\cr
&\alpha\leq\beta\text{ (in }L/R\text{)}\iff   0_\alpha\leq 0_\beta \iff 1_\alpha\leq 1_\beta
\text. 
\end{aligned}
\end{equation}
The most important particular case of $L/R$, under the name skeleton, was discovered by Herrmann~\cite{herrmann} much earlier; we survey it partly and only for the finite case. A tolerance $R$ of (a finite lattice) $L$ is called a \emph{glued tolerance}, see Reuter~\cite{reuter}, if 
its transitive closure $R^\ast$ is the total relation $L^2$. The (unique) smallest glued tolerance of $L$ is  called the \emph{skeleton tolerance} of $L$, and it is denoted by $\Theta(L)$. There are two easy ways to see that $\Theta(L)$ exists. Firstly, we know from the second author~\cite{grygiel_mono}, and it  is routine to check, that for any tolerance $R$ of a finite lattice $L$,
\begin{equation}\label{gluedtol}
R \text{ is a glued tolerance} \iff 
(x,y)\in R \text{ for all }x\prec y\in L\text. 
\end{equation}
This clearly implies that the intersection of all glued tolerances of $L$ is a glued tolerance again, whence it is the skeleton tolerance of $L$. Secondly, we know from \cite{cz_h_r} that the transitive closure of lattice tolerances commutes with their (finitary) intersections, which also implies the existence of $\Theta(L)$. 

\begin{figure}
\centerline
{\includegraphics[scale=1.0]{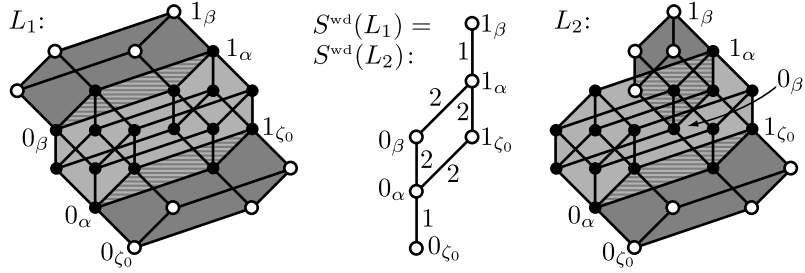}}
\caption{Non-isomorphic lattices with isomorphic double skeletons}\label{figone}
\end{figure}

The factor lattice $L/\Theta(L)$ is called the \emph{skeleton} $\skel L$ of $L$.  We claim that 
$\length{\skel L}<\length L$. Indeed, assume that 
$k=\length{\skel L}$ and $\alpha_0\prec \alpha_1\prec\cdots\prec \alpha_k$ is a maximal chain in $\skel L=L/\Theta(L)$. It follows from 
\eqref{blockscompute} and \eqref{gluedtol}  that $0_{\alpha_0}<0_{\alpha_1}<\cdots< 0_{\alpha_k}<1_{\alpha_k}$, showing that $\length{\skel L}=k<k+1\leq \length L$. The inequality  $\length{\skel L}<\length L$ shows that each finite lattice has a Herrmann rank.

It is clear from \eqref{blockscompute} that both $\set{0_\alpha: \alpha\in \skel L}$ and $\set{1_\alpha: \alpha\in \skel L}$, as sub-posets of $L$, are order isomorphic to $\skel L$. Their union carries a lot of information on $L$ provided we equip it with an appropriate structure. 
Following the second author \cite{grygiel_double}, a structure 
$(P,\leq, K, \eta_0,\eta_1, \wt)$ will be  called an \emph{abstract weighted double skeleton} if $(P,\leq)$ is a finite poset, $K$ is a lattice, $\eta_0\colon K\to P$ 
is a join-preserving (and, therefore, order-preserving) embedding, 
$\eta_1\colon K\to P$ is a meet-preserving order-embedding,  
$P=\eta_0(K)\cup \eta_1(K)$,
$\eta_0(x)\leq\eta_1(x)$ holds for all $x\in K$,
and $\wt$ is a mapping of the covering relation $\set{(a,b)\in P^2: a\prec b}$ into 
$\mathbb N$. The underlying set of $(P,\leq, K, \eta_0,\eta_1, \wt)$  is $P$, and we often denote the structure $(P,\leq, K, \eta_0,\eta_1, \wt)$ simply by $P$.

Let $(P',\leq',K', \eta_0',\eta_1', \wpr )$ be another abstract  weighted double skeleton, and let $(\psi,\kappa)$ be a pair of bijective mappings. We say that 
\[ (\psi,\kappa)\colon  (P,\leq, K, \eta_0,\eta_1, \wt)
\to (P',\leq',K', \eta_0',\eta_1', \wpr )
\]
is an \emph{isomorphism} if
$\psi\colon(P,\leq)\to (P',\leq')$ is an order isomorphism, $\kappa\colon K\to K'$ is a lattice isomorphism,  
$\psi\bigl(\eta_i(x)\bigr)=\eta_i'\bigl(\kappa(x)\bigr)$ for all $x\in K$ and $i\in\set{0,1}$, and $\wof xy=\wprof{\psi(x)}{\psi(x)}$ for all $x,y\in P$ such that $x\prec y$. If there is such a $(\psi,\kappa)$, then the two abstract  weighted double skeletons are called \emph{isomorphic}.
By the (concrete) \emph{weighted double skeleton} of $L$ we mean  the structure
\begin{equation}\label{dkselL}
\dskel L:=\bigl(\set{0_\alpha: \alpha\in \skel L} \cup\set{1_\alpha: \alpha\in \skel L},\leq , \skel L,\eta_0,\eta_1,\wt\bigr)
\end{equation}
where $\leq$ is the ordering inherited (restricted) from $L$, $\eta_0(\alpha):=0_\alpha$ and $\eta_1(\alpha):=1_\alpha$ for all $\alpha\in \skel L$, and 
$\wof x y:=\length{[x,y]_L}$ for any $x\prec_{\dskel L}y$. 

For example, consider $L_j$ given in Figure~\ref{figone}    for $j\in\set{1,2}$. Then $\skel {L_j}$ is the three-element chain $\set{\zeta_0\prec\alpha\prec \beta}$, and $\dskel{L_j}$ is depicted in the middle of the figure. For $x\prec y$ in $\dskel{L_j}$, the edge $x\prec y$ of the diagram is labeled by $\wof x y$.
Since $L_1\not\cong L_2$ and 
$S^n(L_j)$ is the $(4-n)$-element chain for $n\in\set{1,2,3}$ and $j\in\set{1,2}$, 
 Figure~\ref{figone} together with the self-explanatory Figure~\ref{figthree} proves Remark~\ref{notforrankthree}.

We have defined all the concepts Theorem~\ref{thmmain} is based on. The rest of the paper is devoted to proofs, including some auxiliary statements.

\begin{figure}
\centerline
{\includegraphics[scale=1.0]{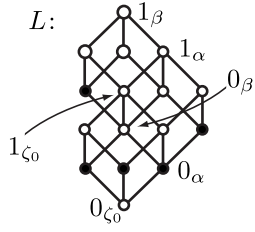}}
\caption{$\length{\Ji L}\leq 1$ 
does not imply $H^2$-irreducibility}  \label{figthree}
\end{figure}

\section{Proofs and auxiliary statements}
\subsection{The number of join-irreducible elements in a block}
Unless otherwise stated, by a \emph{block} of a lattice $L$ we mean a block of its skeleton tolerance $\Theta(L)$, that is, a member of the skeleton $\skel L$.
Throughout this subsection, $L$ denotes a finite modular lattice. 
We are going to extend the weight function $\wt$ of $\dskel L$, see \eqref{dkselL},  to a function $\wxt\colon \dskel L\times   \dskel L\to \mathbb N_0$. If
$x,y\in \dskel L$ and $x\not< y$, then we let $\wxof xy:=0$. If $x,y\in\dskel L$ and  $x < y$, then  take a maximal chain $z_0=x\prec_{\dskel L} z_1 \prec_{\dskel L}  \cdots \prec_{\dskel L} z_t=y$ in $\dskel L$, and define 
\[\wxof xy := \sum_{i=0}^{t-1}\wof {z_i}{z_{i+1}}\text.
\]
The lattice theoretical Jordan-H\"older theorem applies on $L$ and we conclude that $\wof x y:=\length{[x,y]_L}$ for any $x\leq_{\dskel L}y$. This  guarantees  
that $\wxof xy$ does not depend on the maximal chain chosen. 
Given a poset $(Q,\leq)$, the \emph{M\"{o}bius function} $\mu_Q \colon Q^2 \to {\mathbb Z}$ is defined recursively as follows:
\[
\mu_Q(x,y) = \begin{cases} 1,&\text{if $x=y$;}\\
 \displaystyle{-\sum_{x \leq z < y} \mu_Q(x,z)},&\text{if $x<y$;}\\
0,&\text{if $x\not\leq y$.}
\end{cases}
\]

If $L$ is a finite modular lattice, then every $\alpha\in \skel L$ is an atomistic lattice by Herrmann~\cite{herrmann}. In this case,  $\Ji{\alpha}$ stands for the set of join-irreducible elements of $\alpha$ that are distinct from $0_\alpha$. The set of atoms of $\alpha$ is denoted by $\At\alpha$. We let
\begin{equation}\label{jalpadeF}
J_\alpha:=\Ji L \cap \Ji\alpha=\Ji L\cap \At \alpha=\Ji L\cap (\alpha\setminus\set{0_\alpha})\text.
\end{equation}
Since $\dskel L$ determines $\skel L$, the next lemma, based on the notation above, implies that $|J_\alpha|$ is determined by $\dskel L$, provided $L$ is distributive.  
\begin{lemma}\label{nojiralpha}
Let $L$ be  a finite distributive lattice. Then for each $\alpha\in \skel L$, 
\begin{equation}\label{fnojiralpa}
|J_\alpha| = \sum_{\beta \leq \alpha, \,\, \beta \in \skel L} \mu_{\skel L}(\beta , \alpha) \cdot \wxof {0_\alpha} {1_\beta}\text.
\end{equation}
\end{lemma}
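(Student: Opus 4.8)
The plan is to combine Birkhoff's representation of the distributive lattice $L$ with Möbius inversion over the lattice $\skel L$. Since $L$ is distributive I will identify it with the lattice of down-sets of the poset $\Ji L$, so that for $x\le y$ in $L$ the Jordan--H\"older/Birkhoff count gives $\length{[x,y]_L}=\#\set{p\in\Ji L: p\le y,\ p\not\le x}$. In particular, whenever $0_\alpha\le 1_\beta$ we have $\wxof{0_\alpha}{1_\beta}=\#\set{p\in\Ji L: p\le 1_\beta}-\#\set{p\in\Ji L: p\le 0_\alpha}$, whereas the convention forces $\wxof{0_\alpha}{1_\beta}=0$ as soon as $0_\alpha\not\le 1_\beta$. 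The entire difficulty of \eqref{fnojiralpa} is hidden in this case distinction; once it is under control, the formula becomes a one-line Möbius inversion.

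First I would show that the sets $J_\gamma$ partition $\Ji L$, each join-irreducible being an atom of a unique block. Because $L$ is distributive, every block $\gamma$ is an atomistic distributive, hence Boolean, lattice, and by Bandelt and Chajda it is a convex sublattice of $L$. Given $p\in\Ji L$ with unique lower cover $p^{-}$, the covering $p^{-}\prec p$ lies in a common block $\gamma$ by \eqref{gluedtol}; since $\gamma$ is a convex sublattice, $p$ remains join-irreducible in $\gamma$, hence is an atom of the Boolean lattice $\gamma$, which forces $0_\gamma=p^{-}$. Uniqueness is immediate from \eqref{blockscompute}, because $\gamma\mapsto 0_\gamma$ is injective. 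I write $\gamma_p$ for this block, so that $\Ji L=\bigsqcup_{\gamma\in\skel L}J_\gamma$.

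The pivotal incidence identity is: for every block $\beta$, $\set{p\in\Ji L: p\le 1_\beta}=\bigsqcup_{\gamma\le\beta}J_\gamma$, and hence $\#\set{p\in\Ji L: p\le 1_\beta}=\sum_{\gamma\le\beta}|J_\gamma|$. The inclusion $\supseteq$ is clear, since $\gamma\le\beta$ gives $1_\gamma\le 1_\beta$ by \eqref{blockscompute} while $p\le 1_{\gamma_p}$. For the reverse inclusion, which I expect to be the main obstacle, take $p\le 1_\beta$ and set $\delta:=\gamma_p\wedge\beta$. Meet-preservation of $\eta_1$ (that is, $1_{\gamma_p}\wedge 1_\beta=1_\delta$ in \eqref{blockscompute}) yields $p\le 1_\delta$; as also $p>p^{-}=0_{\gamma_p}\ge 0_\delta$, we get $p\in[0_\delta,1_\delta]=\delta$, so $p$ is an atom of $\delta$ and $0_\delta=p^{-}=0_{\gamma_p}$. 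Injectivity of $\gamma\mapsto 0_\gamma$ then gives $\delta=\gamma_p$, i.e. $\gamma_p\le\beta$; with the partition this proves the identity.

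It remains to neutralise the zero-convention. I would first record the overlap of consecutive blocks: if $\beta\prec\delta$ then $0_\delta\le 1_\beta$. Indeed, for $p\le 0_\delta$ the identity gives $\gamma_p\le\delta$; if $\gamma_p\not\le\beta$ then $\gamma_p\vee\beta=\delta$ since $\beta\prec\delta$, whence $0_\delta=0_{\gamma_p}\vee 0_\beta$ by join-preservation of $\eta_0$, and join-primeness of $p$ forces $p\le 0_{\gamma_p}=p^{-}$ or $p\le 0_\beta$; the former is absurd, so $p\le 0_\beta$ and $\gamma_p\le\beta$ by the identity again. Thus every $p\le 0_\delta$ lies below $1_\beta$, giving $0_\delta\le 1_\beta$. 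Now for any $\beta$ with $\mu_{\skel L}(\beta,\alpha)\neq 0$, a standard property of the Möbius function (the crosscut/Weisner corollary) forces $\alpha$ to be the join of the atoms $\delta$ of $[\beta,\alpha]$; applying $\eta_0$ and the overlap gives $0_\alpha=\bigvee_\delta 0_\delta\le 1_\beta$. Hence every nonzero term of \eqref{fnojiralpa} satisfies $0_\alpha\le 1_\beta$ and equals $\#\set{p\le 1_\beta}-\#\set{p\le 0_\alpha}$. Summing, the constant $\#\set{p\le 0_\alpha}$ contributes $\#\set{p\le 0_\alpha}\cdot\sum_{\beta\le\alpha}\mu_{\skel L}(\beta,\alpha)$, which vanishes because the inner sum is $0$ unless $\alpha=\zeta_0$, in which case $0_\alpha=0_L$ and the count is $0$; and the Möbius inversion over $\skel L$ of $\#\set{p\le 1_\beta}=\sum_{\gamma\le\beta}|J_\gamma|$ returns exactly $|J_\alpha|$, completing the proof.
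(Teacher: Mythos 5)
Your proof is correct, but it follows a genuinely different route from the paper's. The paper proves Lemma~\ref{nojiralpha} in a few lines by quoting Reuter's counting formula \cite[Corollary 3]{reuter} (valid for all finite modular lattices) with $k=1$, and then only has to check that $\wxof{0_\alpha}{1_\beta}=|\Jki 1{\alpha\cap\beta}|$: when $\alpha\cap\beta\neq\emptyset$ this holds because $\alpha\cap\beta=[0_\alpha,1_\beta]$ is a Boolean interval, and when $\alpha\cap\beta=\emptyset$ both sides vanish by the zero-convention of $\wxt$. You instead reprove the $k=1$, distributive case of Reuter's formula from scratch: Birkhoff's representation turns each weight into a difference of counts $\#\set{p\le 1_\beta}-\#\set{p\le 0_\alpha}$; the partition $\Ji L=\bigcup_\gamma J_\gamma$ (which is the paper's later Lemma~\ref{kHhDwlP}, so no circularity) plus your incidence identity $p\le 1_\beta\iff\gamma_p\le\beta$ expresses $\#\set{p\le 1_\beta}$ as $\sum_{\gamma\le\beta}|J_\gamma|$; M\"obius inversion then finishes. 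The one delicate point, the zero-convention, you handle by a nice structural observation absent from the paper: if $\mu_{\skel L}(\beta,\alpha)\neq 0$, the crosscut/Weisner corollary makes $\alpha$ a join of covers of $\beta$, and your overlap lemma (essentially Lemma~\ref{precbloists}, reproved via join-primeness) then forces $0_\alpha\le 1_\beta$, so no nonzero term is affected by the convention; terms with $\mu=0$ are harmless regardless. What each approach buys: the paper's argument is shorter and rests on a formula that holds in the modular setting, while yours is self-contained (modulo Herrmann's atomisticity of blocks, which both proofs use, and the standard crosscut corollary) but is genuinely tied to distributivity through join-primeness and Birkhoff's theorem --- which is all the lemma claims anyway.
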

  
\begin{proof} For $k\in \mathbb N$ and a poset $K$, let $\Jki kK$ denote the set of elements of $K$ with exactly $k$ lower covers. Notice that $\Ji L=\Jki 1L$. Reuter~\cite[Corollary 3]{reuter} asserts, even when  $L$ is only modular, that for each $\alpha\in \skel L$ and $k\in\mathbb N$, 
\begin{equation}\label{reuterfnoj}
|\Jki kL\cap\Jki k{\alpha}| = \sum_{\beta \leq \alpha, \,\, \beta \in \skel L} \mu_{\skel L}(\beta , \alpha) \cdot |\Jki k{\alpha\cap\beta}|\text.
\end{equation}
For $k=1$, the lefthand side of \eqref{fnojiralpa} equals  that  of \eqref{reuterfnoj}.
Hence it suffices to show that $\wxof {0_\alpha} {1_\beta}=|\Jki 1{\alpha\cap\beta}|$ holds  for $\beta\leq\alpha$. This is obvious if $\alpha\cap\beta=\emptyset$ since then $0_\alpha\not\leq 1_\beta$ follows from \eqref{blockscompute}. Hence we assume that $\alpha\cap\beta\neq \emptyset$. Then, again by \eqref{blockscompute}, $\alpha\cap\beta= [0_\alpha,1_\beta]$  is a principal ideal of $\alpha$, whence $\alpha\cap\beta$ is a boolean sublattice of the boolean interval $\alpha$. Thus,
\[|\Jki 1{\alpha\cap\beta}|=
|\Ji {[0_\alpha,1_\beta]}|=
|\At {[0_\alpha,1_\beta]}|=\length{[0_\alpha,1_\beta]} = \wxof {0_\alpha}{1_\beta}\text.\qedhere
\]
\end{proof}

\subsection{More about blocks}

Although the following lemma requires a proof in the present setting, it is a part of the original definition of $\skel L$ given by  Herrmann~\cite{herrmann}. For the reader's convenience, we present an easy proof.

\begin{lemma}[{Wille~\cite[Proposition 9]{wille}}]\label{precbloists} 
Let $L$ be a finite lattice. 
If $\alpha\prec_{\skel L}\beta$, then $\alpha\cap\beta\neq\emptyset$.
\end{lemma}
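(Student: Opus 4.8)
The plan is to reduce the statement to a single inequality and then to exploit the fact that $\alpha\prec\beta$ is a \emph{covering} in $\skel L$, not merely a strict order relation. First I would use the order characterization in \eqref{blockscompute}: from $\alpha<\beta$ we get $0_\alpha\le 0_\beta$ and $1_\alpha\le 1_\beta$, hence $0_\alpha\vee 0_\beta=0_\beta$ and $1_\alpha\wedge 1_\beta=1_\alpha$. Since every block is the interval $[0_\gamma,1_\gamma]$, the intersection $\alpha\cap\beta=[0_\alpha,1_\alpha]\cap[0_\beta,1_\beta]$ is nonempty exactly when $0_\beta\le 1_\alpha$. Thus it suffices to prove this one inequality, and the candidate witnesses are $1_\alpha\in\alpha\cap\beta$ and $0_\beta\in\alpha\cap\beta$.

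Next I would produce a convenient auxiliary block sitting inside $\beta$. Because $\alpha\ne\beta$ and the map $\gamma\mapsto 0_\gamma$ is an order embedding, $0_\alpha<0_\beta$ strictly, so I can choose an element $d$ with $0_\alpha\le d\prec 0_\beta$ in $L$, i.e.\ a lower cover of $0_\beta$ lying above $0_\alpha$. By \eqref{gluedtol}, the skeleton tolerance $\Theta(L)$ relates every covering pair, so $(d,0_\beta)\in\Theta(L)$; consequently $d$ and $0_\beta$ lie in a common block $\delta=[0_\delta,1_\delta]$, giving $0_\delta\le d$ and $0_\beta\le 1_\delta$.

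The heart of the argument is to show $\delta\le\alpha$. Since $0_\delta\le d<0_\beta$, the characterization in \eqref{blockscompute} yields $\delta<\beta$, whence $\alpha\le\alpha\vee\delta\le\beta$. Because $\alpha\prec\beta$, either $\alpha\vee\delta=\alpha$ (equivalently $\delta\le\alpha$) or $\alpha\vee\delta=\beta$. I would rule out the second alternative using the join rule $0_{\alpha\vee\delta}=0_\alpha\vee 0_\delta$ from \eqref{blockscompute}: as $0_\alpha\le d$ and $0_\delta\le d$, this would force $0_\beta=0_{\alpha\vee\delta}=0_\alpha\vee 0_\delta\le d<0_\beta$, a contradiction. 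Hence $\delta\le\alpha$, so $1_\delta\le 1_\alpha$ by \eqref{blockscompute}, and therefore $0_\beta\le 1_\delta\le 1_\alpha$, which is precisely the inequality we needed; so $1_\alpha\in\alpha\cap\beta$ and the blocks meet.

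The step I expect to require the most care is the coordinated choice of $d$ together with the dichotomy on $\alpha\vee\delta$. The whole argument hinges on selecting the lower cover $d$ of $0_\beta$ \emph{above} $0_\alpha$ (possible only because $0_\alpha<0_\beta$), which is exactly what makes both $0_\alpha$ and $0_\delta$ lie below $d$ and thereby collapses the alternative $\alpha\vee\delta=\beta$. The remaining points—checking that $d$ is a genuine cover in $L$ rather than only in the interval $[0_\alpha,0_\beta]$, and that the covering hypothesis $\alpha\prec\beta$ (as opposed to mere strict comparability) is what is really used—are routine but essential, since without the covering property the auxiliary block $\delta$ could be incomparable to $\alpha$ and the dichotomy would fail.
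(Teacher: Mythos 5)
Your proof is correct, but it takes a route that is essentially the order-dual of the paper's, organized around a different reduction. Both arguments run on the same two engines: gluedness of $\Theta(L)$ via \eqref{gluedtol}, which lets one place a covering pair of $L$ inside a single block, and the dichotomy forced by the hypothesis $\alpha\prec_{\skel L}\beta$. The paper works at the top of $\alpha$: it chooses $b$ with $1_\alpha\prec b\leq 1_\beta$, first proves $b\in\beta$ using an auxiliary block $\gamma_1$ containing $b$ (a step that appeals to $L=\bigcup_{\nu\in\skel L}\nu$), then extends $\set{1_\alpha,b}$ to a block $\gamma$ and applies the covering dichotomy a second time, to $(\alpha\vee\gamma)\wedge\beta$, chasing elements through the containments \eqref{joinmeet}; either horn of that dichotomy yields an element of $\alpha\cap\beta$. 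You work at the bottom of $\beta$: after reducing the claim to the single inequality $0_\beta\leq 1_\alpha$, you choose $0_\alpha\leq d\prec 0_\beta$, extend $\set{d,0_\beta}$ to a block $\delta$, and apply the dichotomy only once, eliminating the alternative $\alpha\vee\delta=\beta$ by the identity $0_{\alpha\vee\delta}=0_\alpha\vee 0_\delta$ of \eqref{blockscompute}. Your organization buys a shorter argument: one dichotomy instead of two, no auxiliary block $\gamma_1$, and an explicit witness $1_\alpha\in\alpha\cap\beta$ identified in advance; the price is a heavier reliance on the arithmetic of \eqref{blockscompute}, where the paper mostly gets by with the element-wise closure properties \eqref{joinmeet}. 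The two points you flag as delicate are indeed the right ones, and both go through: a cover inside the interval $[0_\alpha,0_\beta]$ is a cover in $L$ by convexity of intervals, and the covering hypothesis enters exactly once, in the dichotomy that forces $\delta\leq\alpha$.
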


\begin{proof} Assume that $\alpha\prec_{\skel L}\beta$, and let $a:=1_\alpha$.  Since  $1_\alpha< 1_\beta$ by \eqref{blockscompute}, we can take an element $b\in L$ such that $a=1_\alpha\prec b\leq 1_\beta$. 
Since $L=\bigcup_{{\nu}\in \skel L}{\nu}$, there is a $\gamma_1\in \skel L$ containing $b$. By  \eqref{joinmeet}, $\gamma_2:=\alpha\vee \gamma_1$ contains $b=a\vee b$, and $\gamma_3:=\beta\wedge \gamma_2$ also contains $b=1_\beta\wedge b$. Clearly, $\alpha\leq \gamma_3\leq \beta$. Taking $b\in \gamma_3\setminus \alpha$ and $\alpha \prec_{\skel L}\beta$ into account, we conclude that $b\in\gamma_3=\beta$. 

Next, $\set{a,b}^2\subseteq\Theta(L)$ since $a\prec b$. Hence there is a block $\gamma\in\skel L$ such that $\set{a,b}\subseteq \gamma$. 
Using  \eqref{joinmeet} repeatedly, we obtain that $a=a\vee a\in \alpha\vee \gamma$, $b=a\vee b\in \alpha\vee \gamma$, $a=a\wedge b\in (\alpha\vee \gamma)\wedge \beta$ and $b=b\wedge b\in (\alpha\vee \gamma)\wedge \beta$. 
That is, $\set{a,b}\subseteq (\alpha\vee \gamma)\wedge \beta$. On the other hand, $\alpha\leq (\alpha\vee \gamma)\wedge \beta \leq\beta$ together with $\alpha\prec_{\skel L}\beta$ yields that $\alpha= (\alpha\vee \gamma)\wedge \beta$ or  $\beta= (\alpha\vee \gamma)\wedge \beta$. Hence $\set{a,b}\subseteq \alpha$ or 
$\set{a,b}\subseteq \beta$, and we conclude that $\alpha\cap\beta\neq\emptyset$. 
\end{proof}

For the reader's convenience, we also prove the following lemma. Due to the forthcoming formula \eqref{tRmdoptrho},
the present approach is slightly simpler than the original one of the second author \cite{grygiel_mono}.

\begin{lemma}[{\cite[Theorem 2.2.10]{grygiel_mono}}]\label{ifTcLabT}
Let $L$ be a finite lattice. Assume that $\alpha,\beta\in \skel L$ such that 
$(\alpha,\beta)\in\Theta(\skel L)$. Then $\alpha\cap\beta\neq\emptyset$.
\end{lemma}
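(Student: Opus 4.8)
The plan is to realize $\Theta(\skel L)$ as the subalgebra of $\skel L\times\skel L$ generated by the covering pairs, and then to propagate the nonemptiness of block intersections along this generating process. First I would observe that, by \eqref{gluedtol}, the skeleton tolerance $\Theta(\skel L)$ is the smallest tolerance of $\skel L$ that contains every pair $\gamma\prec_{\skel L}\delta$. Since a tolerance is exactly a reflexive, symmetric subalgebra of $\skel L\times\skel L$, it follows that $\Theta(\skel L)$ equals the subalgebra of $\skel L\times\skel L$ generated by
\[
G:=\set{(\gamma,\delta):\gamma\prec_{\skel L}\delta}\cup\set{(\delta,\gamma):\gamma\prec_{\skel L}\delta}\cup\set{(\gamma,\gamma):\gamma\in\skel L},
\]
because the generated subalgebra is reflexive, and (as $G$ is symmetric and the coordinate-swap is a lattice automorphism of $\skel L\times\skel L$) symmetric, hence a tolerance, and it is clearly the least tolerance containing all covering pairs.

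Next I would consider the set
\[
T:=\set{(\alpha,\beta)\in\skel L\times\skel L:\alpha\cap\beta\neq\emptyset}
\]
and show that $T$ is a subalgebra of $\skel L\times\skel L$ containing $G$; this immediately yields $\Theta(\skel L)\subseteq T$, which is exactly the assertion of the lemma. That $G\subseteq T$ is clear: the diagonal pairs lie in $T$ trivially, and for a covering pair $\gamma\prec_{\skel L}\delta$ Lemma~\ref{precbloists} gives $\gamma\cap\delta\neq\emptyset$.

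The core step is closure of $T$ under the componentwise operations of $\skel L\times\skel L$, and here \eqref{joinmeet} does all the work. Suppose $(\alpha,\beta),(\gamma,\delta)\in T$, and pick $p\in\alpha\cap\beta$ and $q\in\gamma\cap\delta$. Since $p\in\alpha$ and $q\in\gamma$, \eqref{joinmeet} gives $p\vee q\in\alpha\vee\gamma$; since $p\in\beta$ and $q\in\delta$, the same inclusion gives $p\vee q\in\beta\vee\delta$. Thus the single element $p\vee q$ witnesses $(\alpha\vee\gamma)\cap(\beta\vee\delta)\neq\emptyset$, so $(\alpha,\beta)\vee(\gamma,\delta)\in T$. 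The meet case is symmetric, using $p\wedge q$ and the second inclusion of \eqref{joinmeet}. Hence $T$ is a subalgebra containing $G$, and the argument is complete.

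The only point that needs care is the identification in the first paragraph, namely that "smallest glued tolerance" coincides with "smallest tolerance containing all covering pairs"; but this is precisely the content of the equivalence \eqref{gluedtol}, so no genuine obstacle remains. Everything else reduces to a direct application of \eqref{joinmeet} together with Lemma~\ref{precbloists}, and I expect the referee's attention to rest on confirming that a single witness $p\vee q$ (resp.\ $p\wedge q$) simultaneously belongs to both joined (resp.\ met) blocks.
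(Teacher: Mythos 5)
Your proof is correct, and it rests on the same three ingredients as the paper's own: realizing $\Theta(\skel L)$ as the subalgebra of $\skel L\times\skel L$ generated by the covering pairs, their transposes, and the diagonal; Lemma~\ref{precbloists} to seed nonempty intersections at the generators; and \eqref{joinmeet} to propagate a common witness through joins and meets. The difference is in how the generation process is traversed. The paper works bottom-up: it proves \eqref{tRmdoptrho} by induction on lattice terms, writes the given pair $(\alpha,\beta)$ as an explicit term value on the generators, and substitutes the witnesses $x_i\in\kappa_i\cap\lambda_i$ into both coordinates at once, so that the single element $p(x_1,\ldots,x_k,x_1,\ldots,x_k,y_1,\ldots,y_t)$ lands in $\alpha\cap\beta$. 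You work top-down: the set $T$ of pairs of blocks with nonempty intersection contains the generators and is closed under the componentwise operations --- your computation with $p\vee q$ and $p\wedge q$ is exactly the binary instance of the paper's induction step --- hence $T$ contains the generated subalgebra $\Theta(\skel L)$. Your packaging dispenses with explicit terms and with the induction behind \eqref{tRmdoptrho}, absorbing the ``same witness in both coordinates'' bookkeeping into the definition of $T$, so it is shorter and arguably cleaner; the paper's packaging yields the reusable general containment \eqref{tRmdoptrho} as a by-product. A further small merit of your write-up is that you justify the identification of the smallest glued tolerance with the subalgebra generated by your set $G$ (via \eqref{gluedtol}, the symmetry of $G$, and the swap automorphism of $\skel L\times\skel L$), a point the paper passes over in a single sentence.
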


\begin{proof} A straightforward induction based on \eqref{joinmeet}  shows that, for any lattice term $p(x_1,\ldots,x_n)$ and for any $\nu_1,\ldots,\nu_n \in S:=\skel L$,
\begin{equation}\label{tRmdoptrho}
\set{p(x_1,\ldots,x_n): x_1\in\nu_1,\ldots, x_n\in\nu_n}\subseteq p(\nu_1,\ldots,\nu_n)\text.
\end{equation}

Next, let $S=\set{\sigma_1,\ldots,\sigma_{t}}$, and let $\kappa_1\prec_S \lambda_1$, $\ldots$, $\kappa_k\prec_S \lambda_k$ be a list of all covering pairs of $S$. Since the skeleton tolerance $\Theta(S)$ of $S$ is generated by $\set{(\kappa_1,\lambda_1),\ldots, (\kappa_k,\lambda_k)}$, it coincides with the subalgebra of $S^2$ generated by
\[\set{(\kappa_1,\lambda_1),\ldots, (\kappa_k,\lambda_k), (\lambda_1,\kappa_1),\ldots (\lambda_k,\kappa_k),(\sigma_1,\sigma_1),\ldots,(\sigma_{t},\sigma_{t})}\text.
\]
Hence there exists a $(2k+t)$-ary lattice term $p$ such that 
{\allowdisplaybreaks
\begin{align*}
(\alpha,\beta)&=p\bigl( (\kappa_1,\lambda_1), \ldots,(\kappa_k,\lambda_k),(\lambda_1,\kappa_1), \ldots, (\lambda_k,\kappa_k), (\sigma_1,\sigma_1), \ldots,(\sigma_{t}, \sigma_{t})\bigr)\cr
&=\bigl(p(\kappa_1,\ldots,\kappa_k,\lambda_1,\ldots,\lambda_k,\sigma_1,\ldots,\sigma_{t}),\cr
& \kern 18pt p(\lambda_1,\ldots,\lambda_k,\kappa_1,\ldots,\kappa_k,\sigma_1,\ldots,\sigma_{t})\bigr)\text.
\end{align*}%
}%
It follows from Lemma~\ref{precbloists} that there are $x_1,\ldots,x_k,y_1,\ldots,y_t\in L$ such that $x_i\in \kappa_i\cap\lambda_i =\lambda_i\cap \kappa_i$ for $i=1,\ldots,k$ and $y_j\in \sigma_j$ for $j=1,\ldots,t$. Hence the above expression for $(\alpha,\beta)$ together with \eqref{tRmdoptrho}
yields that 
\[p(x_1,\ldots,x_k, x_1,\ldots,x_k, y_1,\ldots,y_t)\in\alpha\cap\beta\text. \qedhere
\]
\end{proof}

\subsection{More about join-irreducible elements in blocks}
\begin{notation}\label{notdomlT}
Let $\Jtop L:=\Ji L\setminus \At L$. Let $\zeta_0=[0,z_0]$ be the least element of $\skel L$, and let 
$\upskel L:=\skel L\setminus\set{\zeta_0}$.
For $x\in L$, $\domin x:=\set{y\in\At L: y\leq x}$ is called the set of atoms \emph{dominated} by $x$. Similarly, for $\alpha\in \upskel L$, $\domin\alpha :=\set{y\in\At L: y\leq 0_\alpha}$ is the set of atoms dominated by $\alpha$.
\end{notation}

The next lemma is easy. Having no reference at hand, we will give  a proof.

\begin{lemma}\label{joiNatmszn} Let $L$ be a finite modular lattice. Then 
$[0,\bigvee_{x\in\At L}x]=\zeta_0$. Furthermore, if $\alpha\in \skel L$ such that $J_\alpha\cap \At L\neq \emptyset$, then $\alpha=\zeta_0$. 
\end{lemma}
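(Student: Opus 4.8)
The plan is to reduce everything to one structural fact: $\zeta_0$ is the \emph{unique} block of $L$ containing the bottom element $0$. First I would note that each block is an interval $[0_\alpha,1_\alpha]$, so $0\in\alpha$ holds exactly when $0_\alpha=0$. By the last line of \eqref{blockscompute}, $\alpha\leq\beta$ in $\skel L$ is equivalent to $0_\alpha\leq 0_\beta$; hence any two blocks sharing the bottom $0$ must coincide. Since $\zeta_0$ is the least element of $\skel L$ and thus $0_{\zeta_0}=0$, this shows that $\zeta_0=[0,z_0]$ is the only block through $0$.

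For the first assertion, I would take any atom $x\in\At L$, so that $0\prec x$. The glued-tolerance characterization \eqref{gluedtol} gives $(0,x)\in\Theta(L)$, so $0$ and $x$ lie in a common block, which by the previous step is $\zeta_0$; therefore $x\leq z_0$. This proves $\bigvee_{x\in\At L}x\leq z_0$ and, more precisely, that the atoms of the interval $[0,z_0]$ are exactly the elements of $\At L$ (an element covers $0$ in $[0,z_0]$ iff it covers $0$ in $L$ and lies below $z_0$, and all atoms of $L$ do lie below $z_0$). Because $L$ is modular, every block---in particular $\zeta_0$---is atomistic by Herrmann's theorem, so $z_0=1_{\zeta_0}$ is the join of the atoms of $\zeta_0$, i.e.\ $z_0=\bigvee_{x\in\At L}x$. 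Consequently $[0,\bigvee_{x\in\At L}x]=[0,z_0]=\zeta_0$.

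For the second assertion, suppose $\alpha\in\skel L$ and pick $y\in J_\alpha\cap\At L$. By the definition \eqref{jalpadeF} of $J_\alpha$ we have $y\in\alpha\setminus\set{0_\alpha}$, so $0_\alpha\leq y$ and $0_\alpha\neq y$, that is, $0_\alpha<y$. Since $y$ is an atom of $L$ we have $0\prec y$, and $0\leq 0_\alpha<y$ then forces $0_\alpha=0$. By the uniqueness established in the first paragraph, $\alpha=\zeta_0$.

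The only genuinely non-formal ingredient is the first step---pinning down that $0$ belongs to no block other than $\zeta_0$---which I expect to be the main (though modest) obstacle; once it is in place, the first assertion follows from \eqref{gluedtol} together with the atomisticity of blocks, and the second is an immediate order-theoretic consequence of the definition of $J_\alpha$.
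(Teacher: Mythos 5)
Your proof is correct and follows essentially the same route as the paper's: both rest on the facts that atoms are $\Theta(L)$-related to $0$ via \eqref{gluedtol}, that blocks of a finite modular lattice are atomistic (Herrmann), and that \eqref{blockscompute} identifies and orders blocks by their bottom elements, and your second paragraph on $J_\alpha\cap\At L\neq\emptyset$ is exactly the paper's argument. The only cosmetic difference is that the paper uses compatibility of $\Theta(L)$ to get $(0,\bigvee_{x\in\At L}x)\in\Theta(L)$ in one step and then extends this pair to a block, whereas you place each atom into $\zeta_0$ individually by appealing to the uniqueness of the block containing $0$; both variants then pin down the top of that block by atomisticity.
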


\begin{proof} Let $z_0:=\bigvee_{x\in\At L}x$. Since $(0,x)\in \Theta(L)$ for all $x\in\At L$, we obtain that $(0,z_0)\in \Theta(L)$. Hence we can extend $\set{0,z_0}$ to a block $\alpha=[0,y]$ of $\Theta(L)$. Obviously, $z_0\leq y$. We know from Herrmann~\cite{herrmann} that $\alpha$ is an atomistic lattice. Hence 
$y=\bigvee_{x\in \At{\alpha}}x\leq \bigvee_{x\in \At L}x=z_0,
$ 
and we conclude that $[0,z_0]=[0,y]=\alpha\in  \skel L$. It is the smallest element of $\skel L$, that is $\zeta_0$,  by \eqref{blockscompute}.
Finally, if an atom $a\in \At L$ belongs to $J_\alpha$, then $0_\alpha\prec a$ implies $0_\alpha=0=0_{\zeta_0}$, whence $\alpha=\zeta_0$ by \eqref{blockscompute}.
\end{proof}

\begin{lemma}\label{kHhDwlP} Let $L$ be a finite modular lattice. Then 
\begin{enumeratei}
\item\label{kHhDwlPa} $\Ji L=\bigcup\set{J_\alpha: \alpha\in\skel L}$;
\item\label{kHhDwlPb} for all $\alpha,\beta\in \skel L$, if $\alpha\neq\beta$, then $J_\alpha\cap J_\beta = \emptyset$;
\item\label{kHhDwlPc} $\Jtop L=\bigcup\set{J_\alpha: \alpha\in\upskel L}$ and  $\At L=J_{\zeta_0}=\At{\zeta_0}$.
\end{enumeratei}
\end{lemma}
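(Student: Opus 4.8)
The plan is to reduce all three parts to a single observation: every $p\in\Ji L$ has a unique lower cover, which I denote $p_\ast$, and the block $\alpha$ with $p\in J_\alpha$ is pinned down by the equation $0_\alpha=p_\ast$. Part \ref{kHhDwlPa} is then the \emph{existence} of such a block, part \ref{kHhDwlPb} its \emph{uniqueness}, and part \ref{kHhDwlPc} follows by combining this with Lemma~\ref{joiNatmszn}.

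For \ref{kHhDwlPa}, the inclusion $\bigcup\set{J_\alpha:\alpha\in\skel L}\subseteq\Ji L$ is immediate from \eqref{jalpadeF}. For the reverse inclusion I would take $p\in\Ji L$ with unique lower cover $p_\ast$. Since $p_\ast\prec p$, \eqref{gluedtol} yields $(p_\ast,p)\in\Theta(L)$, so some block $\alpha$ contains both $p_\ast$ and $p$. As $p_\ast\in\alpha$ and $p_\ast<p$, the element $p$ cannot be $0_\alpha$, so $p\in\Ji L\cap(\alpha\setminus\set{0_\alpha})=J_\alpha$ by \eqref{jalpadeF}.

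The key claim I would next establish is that $p\in J_\alpha$ forces $0_\alpha=p_\ast$. By \eqref{jalpadeF} (this is where modularity enters, through the atomistic blocks), $p\in\At\alpha$, hence $0_\alpha<p\leq 1_\alpha$; and since $p$ is join-irreducible, $p_\ast$ is the greatest element strictly below $p$, so $0_\alpha\leq p_\ast$. Together with $p_\ast<p\leq 1_\alpha$ this gives $p_\ast\in[0_\alpha,1_\alpha]=\alpha$, and since $p$ covers $0_\alpha$ in $\alpha$ while $0_\alpha\leq p_\ast<p$, we conclude $p_\ast=0_\alpha$. Part \ref{kHhDwlPb} is then immediate: if $p\in J_\alpha\cap J_\beta$ then $0_\alpha=p_\ast=0_\beta$, and since $\gamma\mapsto 0_\gamma$ is injective on $\skel L$ by \eqref{blockscompute}, we get $\alpha=\beta$.

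For \ref{kHhDwlPc} I would first record that $\At{\zeta_0}=\At L$: by Lemma~\ref{joiNatmszn}, $\zeta_0=[0,\bigvee_{x\in\At L}x]$ is an ideal containing every atom, so its atoms are exactly the atoms of $L$; as atoms are join-irreducible, $J_{\zeta_0}=\Ji L\cap\At{\zeta_0}=\At L$. By parts \ref{kHhDwlPa} and \ref{kHhDwlPb}, $\Ji L$ is the disjoint union of the sets $J_\alpha$. Lemma~\ref{joiNatmszn} says $J_\alpha\cap\At L\neq\emptyset$ only when $\alpha=\zeta_0$, so $J_\alpha\subseteq\Jtop L$ for every $\alpha\in\upskel L$; conversely, any $p\in\Jtop L$ lies in some $J_\alpha$ by \ref{kHhDwlPa}, and $\alpha\neq\zeta_0$ because $p\notin\At L=J_{\zeta_0}$, whence $\alpha\in\upskel L$. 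This gives $\Jtop L=\bigcup\set{J_\alpha:\alpha\in\upskel L}$. The only genuinely nontrivial step is the key claim $p_\ast=0_\alpha$; the subtle point there is the quiet use of the fact that blocks are intervals, which is what lets $p_\ast$ sit inside $\alpha$ and turns the covering of $0_\alpha$ by $p$ in $\alpha$ into the conclusion $p_\ast=0_\alpha$. Everything else is bookkeeping with \eqref{blockscompute}, \eqref{gluedtol}, and Lemma~\ref{joiNatmszn}.
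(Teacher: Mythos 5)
Your proof is correct and follows essentially the same route as the paper's: part \eqref{kHhDwlPa} by extending $\set{p_\ast,p}$ to a block of the glued tolerance $\Theta(L)$, part \eqref{kHhDwlPb} from the unique lower cover of a join-irreducible element forcing $0_\alpha=0_\beta$ together with the injectivity of $\gamma\mapsto 0_\gamma$ from \eqref{blockscompute}, and part \eqref{kHhDwlPc} by combining \eqref{kHhDwlPa}, \eqref{kHhDwlPb} and Lemma~\ref{joiNatmszn}. Your explicit verification that $p_\ast\in\alpha$ and $p_\ast=0_\alpha$ just spells out what the paper compresses into ``$x\succ 0_\alpha$'', and your part \eqref{kHhDwlPc} fills in details the paper leaves to the reader; there is no substantive difference.
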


\begin{proof} Assume that $a\in \Ji L$, and let $a^-$ stand for its unique lower cover.
Then $\set{a^-,a}\subseteq \Theta(L)$ since  $\Theta(L)$  is a glued tolerance.
We can extend $\set{a^-,a}$ to a block $\alpha\in \skel L$. Then $a\in \Ji L \cap   (\alpha\setminus\set{0_{\alpha}})=J_\alpha$. This proves that $\Ji L\subseteq\bigcup\set{J_\alpha: \alpha\in\skel L}$. The reverse inclusion in part~\eqref{kHhDwlPa} is trivial.

Assume that $x\in J_\alpha\cap J_\beta$. Then, by \eqref{jalpadeF}, $x\succ 0_\alpha$ and $x\succ 0_\beta$. Hence $x\in\Ji L$ implies that $ 0_\alpha=0_\beta$. This together with \eqref{blockscompute} yields $\alpha=\beta$, which proves part~\eqref{kHhDwlPb}. Finally, parts~\eqref{kHhDwlPa} and \eqref{kHhDwlPb} together with Lemma~\ref{joiNatmszn} imply part~\eqref{kHhDwlPc}.
\end{proof}

\begin{lemma}\label{irrPar} Let $\alpha$, $\beta$ and $\zeta$ be distinct blocks of a finite modular lattice $L$ such that $\zeta<\alpha$, $\zeta<\beta$ and $\set{\alpha,\beta,\zeta}^2\subseteq \Theta\bigl(\skel L\bigr)$. Then $a\parallel b$  holds for all $a\in J_\alpha$ and $b\in J_\beta$.
\end{lemma}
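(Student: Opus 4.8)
The plan is to argue by contradiction. Suppose some $a\in J_\alpha$ and $b\in J_\beta$ are comparable; since the hypotheses are symmetric in $\alpha$ and $\beta$, I may assume $a\le b$. I would first dispose of the equality: if $a=b$, then this common element lies in $\alpha\cap\beta$ and, being join-irreducible with $a\succ 0_\alpha$ and $a\succ 0_\beta$, has a unique lower cover, forcing $0_\alpha=0_\beta$ and hence $\alpha=\beta$ by \eqref{blockscompute}, against distinctness. So $a<b$.

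Next I would extract the geometric content of the tolerance triangle. Applying Lemma~\ref{ifTcLabT} to the three pairs $(\zeta,\alpha)$, $(\zeta,\beta)$, $(\alpha,\beta)$, all of which lie in $\Theta(\skel L)$, gives $\zeta\cap\alpha\neq\emptyset$, $\zeta\cap\beta\neq\emptyset$ and $\alpha\cap\beta\neq\emptyset$. Using $\zeta<\alpha$, $\zeta<\beta$ together with the block arithmetic \eqref{blockscompute}, these intersections are the intervals $[0_\alpha,1_\zeta]$, $[0_\beta,1_\zeta]$ and $[0_\alpha\vee 0_\beta,\,1_\alpha\wedge 1_\beta]$, and their nonemptiness yields exactly the three inequalities $0_\alpha\le 1_\zeta$, $0_\beta\le 1_\zeta$ and $0_\beta\le 1_\alpha$ that drive the argument. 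Since $L$ is modular, $b$ is an atom of the atomistic block $\beta$, so $0_\beta\prec b$ in $L$; thus $[0_\beta,b]=\set{0_\beta,b}$, and as $0_\beta\le a\vee 0_\beta\le b$ there are precisely two cases, $a\vee 0_\beta=0_\beta$ or $a\vee 0_\beta=b$.

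In the first case $a\le 0_\beta\le 1_\zeta$, so $a\in[0_\alpha,1_\zeta]=\zeta\cap\alpha\subseteq\zeta$; since $a\in\Ji L$ and $a>0_\alpha>0_\zeta$, the definition \eqref{jalpadeF} places $a$ in $J_\zeta$, so $a\in J_\zeta\cap J_\alpha$, contradicting Lemma~\ref{kHhDwlP}\eqref{kHhDwlPb}. In the second case $b=a\vee 0_\beta\le 1_\alpha$, because both $a\le 1_\alpha$ and $0_\beta\le 1_\alpha$; together with $0_\alpha\le a\le b$ this gives $b\in\alpha$, and as $b\in\Ji L$ with $b>0_\alpha$ we get $b\in J_\alpha\cap J_\beta$, again contradicting Lemma~\ref{kHhDwlP}\eqref{kHhDwlPb}. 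Either way we reach a contradiction, so $a\parallel b$.

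The step I expect to require the most thought is realizing that the third block $\zeta$ is indispensable precisely in the first case: with $\alpha,\beta$ alone an atom of $\alpha$ could well lie below $0_\beta$, and it is the ceiling $0_\beta\le 1_\zeta$ — available only because $\zeta$ is a common lower block tolerance-related to $\beta$ — that pushes such an $a$ up into $\zeta$ and triggers the disjointness contradiction. The rest is bookkeeping: matching each nonempty intersection to the correct inequality through \eqref{blockscompute}, and invoking modularity (Herrmann's atomisticity of the blocks) only to ensure that $b$ covers $0_\beta$, which is what makes the two-case split exhaustive.
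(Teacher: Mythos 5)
Your proof is correct, and its core is the same as the paper's: argue by contradiction, feed the tolerance hypothesis into Lemma~\ref{ifTcLabT} and \eqref{blockscompute}, and squeeze $a$ into $\zeta$ via the chain $0_\zeta<0_\alpha\prec a\leq 0_\beta\leq 1_\zeta$, at which point atomisticity of $\zeta$ kills join-irreducibility of $a$ (your formulation ``$a\in J_\zeta\cap J_\alpha$ contradicts Lemma~\ref{kHhDwlP}\eqref{kHhDwlPb}'' and the paper's ``$a$ is join-reducible in $\zeta$, hence in $L$'' are the same fact, packaged differently through \eqref{jalpadeF}). Where you genuinely diverge is the passage from $a<b$ to $a\leq 0_\beta$: the paper gets it in one line, because $b\in J_\beta\subseteq\Ji L$ has a \emph{unique} lower cover in $L$, which must be $0_\beta$ (as $0_\beta\prec b$), so $a<b$ forces $a\leq 0_\beta$ outright. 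You instead split into the cases $a\vee 0_\beta=0_\beta$ and $a\vee 0_\beta=b$, and dispose of the second by a third application of Lemma~\ref{ifTcLabT}, to the pair $(\alpha,\beta)$, to obtain $0_\beta\leq 1_\alpha$ and hence $b\in J_\alpha\cap J_\beta$. This is valid, but it is a detour: the unique-lower-cover observation --- which you yourself invoke to rule out $a=b$ --- collapses your case 2 before it arises. A side effect worth noting is that the paper's argument only ever uses the pairs $(\zeta,\alpha)$ and $(\zeta,\beta)$ from the hypothesis $\set{\alpha,\beta,\zeta}^2\subseteq\Theta\bigl(\skel L\bigr)$, i.e., the full ``tolerance triangle'' (in particular $\alpha\cap\beta\neq\emptyset$) is never needed, whereas your route consumes all three pairs.
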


\begin{proof}
Assume that $a$ is comparable with $b$. 
It follows from Lemma~\ref{kHhDwlP}\eqref{kHhDwlPb} that $a\neq b$. Hence we can assume that $a<b$.  
We infer $\alpha\cap\zeta\neq\emptyset$ and  
$\beta\cap\zeta\neq\emptyset$ by Lemma~\ref{ifTcLabT}. 
This together with  $\zeta<\alpha$ and $\zeta<\beta$ yields that 
$0_\zeta<0_\alpha\leq 1_\zeta$ and  
$0_\zeta<0_\beta\leq 1_\zeta$. Since  $a\in J_\alpha\subseteq \At\alpha$ by \eqref{jalpadeF}, we have that $0_\alpha\prec a$. Similarly,  $0_\beta\prec b$. 
Since $b\in J_\beta\subseteq\Ji L$, $0_\beta$ is the only lower cover of $b$. This together with $a<b$ implies that $a\leq 0_\beta$.
Therefore, 
$0_\zeta<0_\alpha \prec  a \leq 0_\beta\leq 1_\zeta$. This means that $a\in\zeta$ but, since $\Ji\zeta=\At \zeta$, $a$ is join-reducible in $\zeta$, whence it is also join-reducible in $L$. This contradicts $a\in J_\alpha\subseteq \Ji L$. 
\end{proof}

\begin{proof}[Proof of Theorem~\ref{thmmain}\eqref{thmmaina}]
Let us assume for a contradiction that 
there exist 
$a_1,a_2,a_3\in\Ji L$ such that $a_1<a_2<a_3$. 
By Lemma~\ref{kHhDwlP}\eqref{kHhDwlPa}, we can choose 
$\alpha_1,\alpha_2,\alpha_3\in \skel L$ such that  $a_i\in J_{\alpha_i}$, for $i\in\set{1,2,3}$. Since  $J_{\alpha_i} \subseteq \At{\alpha_i}$ is an antichain for $i=1,2,3$,  the blocks $\alpha_1,\alpha_2,\alpha_3$ are pairwise distinct. Hence at least two of them, say $\alpha_j$ and $\alpha_k$,  are distinct from the smallest element $\zeta_0$ of $\skel L$. Since 
 $S^2(L)= \skel L/\Theta\bigl(\skel L \bigr)$ is the singleton lattice, $\Theta(\skel L)$ is the full relation on $\skel L$.
Therefore, applying  Lemma~\ref{irrPar} to $\alpha_j$, $\alpha_k$ and $\zeta_0$, we obtain that $a_j\parallel a_k$, a contradiction.
\end{proof}

Clearly,  $|\Jtop L|\neq \emptyset$ if{f} $\length {\Ji L}\geq 1$. Hence the next lemma would  (vacuously) also hold if $\length{\Ji L}$ was $0$. Notation~\ref{notdomlT} will be in effect.

\begin{lemma}\label{iSfRg} Let $L$ be a finite distributive lattice such that $\length{\Ji L}=1$. Let $\alpha,\alpha_1,\ldots,\alpha_n\in\upskel L$ such that none of $J_\alpha, J_{\alpha_1}, \ldots, J_{\alpha_n}$ is empty. Then
\begin{enumeratei}
\item\label{iSfRga} $\domin{\alpha}\neq\emptyset$;
\item\label{iSfRgb} $\alpha_1\leq \alpha_2\,$ if{f} $\,\domin{\alpha_1}\subseteq\domin{\alpha_2}$;
\item\label{iSfRgc}  $|\domin{\alpha_1}\cup\ldots\cup\domin{\alpha_n}|=\wxof 0 {0_{\alpha_1\vee \cdots\vee \alpha_n}}$. 
\end{enumeratei}
\end{lemma}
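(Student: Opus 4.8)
The plan is to reduce all three parts to a single structural fact: for every $\alpha\in\upskel L$ with $J_\alpha\neq\emptyset$, the bottom of the block satisfies $0_\alpha=\bigvee\domin\alpha$, and moreover every member of $\Ji L$ lying below $0_\alpha$ is already an atom. To obtain this I would pick $a\in J_\alpha$. By \eqref{jalpadeF}, $a\in\At\alpha$, so $0_\alpha\prec a$ in the interval $\alpha=[0_\alpha,1_\alpha]$ and hence in $L$; since $a\in\Ji L$ has a unique lower cover, this forces $a^-=0_\alpha$. Because $\alpha\neq\zeta_0$, Lemma~\ref{kHhDwlP}\eqref{kHhDwlPc} gives $a\in\Jtop L$, so $a$ is not an atom and $0_\alpha=a^->0$. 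Now $\length{\Ji L}=1$ means that any $p\in\Ji L$ with $p<a$ is minimal in $\Ji L$, hence an atom of $L$ (a minimal join-irreducible covers $0$). Writing $0_\alpha=a^-$ as the join of the join-irreducibles of $L$ below it (distributivity), all of these are atoms, whence $0_\alpha=\bigvee\domin\alpha$ and $\set{p\in\Ji L:p\leq 0_\alpha}=\domin\alpha$.

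Part~\eqref{iSfRga} is then immediate: $0_\alpha=\bigvee\domin\alpha$ is strictly above $0$ because $\alpha\neq\zeta_0$ (by \eqref{blockscompute} and Lemma~\ref{joiNatmszn}), so $\domin\alpha\neq\emptyset$. For part~\eqref{iSfRgb}, the forward implication follows from \eqref{blockscompute}, since $\alpha_1\leq\alpha_2$ gives $0_{\alpha_1}\leq 0_{\alpha_2}$ and therefore $\domin{\alpha_1}\subseteq\domin{\alpha_2}$. For the converse, $\domin{\alpha_1}\subseteq\domin{\alpha_2}$ yields $0_{\alpha_1}=\bigvee\domin{\alpha_1}\leq\bigvee\domin{\alpha_2}=0_{\alpha_2}$ by the opening fact, and \eqref{blockscompute} turns this back into $\alpha_1\leq\alpha_2$.

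For part~\eqref{iSfRgc}, set $\beta:=\alpha_1\vee\cdots\vee\alpha_n$ and $D:=\domin{\alpha_1}\cup\cdots\cup\domin{\alpha_n}$. Iterating the identity $0_{\gamma\vee\delta}=0_\gamma\vee 0_\delta$ from \eqref{blockscompute} and inserting $0_{\alpha_i}=\bigvee\domin{\alpha_i}$ gives $0_\beta=0_{\alpha_1}\vee\cdots\vee 0_{\alpha_n}=\bigvee D$, a join of atoms. Since join-irreducibles are join-prime in a distributive lattice, any $p\in\Ji L$ with $p\leq 0_\beta$ satisfies $p\leq d$ for some atom $d\in D$, forcing $p=d$; hence $\set{p\in\Ji L:p\leq 0_\beta}=D$. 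Finally $0,0_\beta\in\dskel L$ and $\wxof 0 {0_\beta}=\length{[0,0_\beta]_L}$ (as recorded after the definition of $\wxt$), and in a finite distributive lattice the length of the principal ideal $[0,0_\beta]$ equals the number of join-irreducibles below $0_\beta$, namely $|D|$. This is exactly the asserted equality.

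The only genuinely delicate point is the opening structural fact, and it is precisely where $\length{\Ji L}=1$ enters: without the height bound, $0_\alpha=a^-$ could sit above non-atom join-irreducibles, and then $\domin\alpha$ would recover neither $0_\alpha$ nor the full set of join-irreducibles below it. Everything after that is routine distributive-lattice bookkeeping, namely join-primeness, additivity of length over principal ideals, and the translations between blocks and their bottoms supplied by \eqref{blockscompute}.
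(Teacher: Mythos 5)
Your proof is correct and takes essentially the same approach as the paper's: both arguments turn on the observation that, since $\length{\Ji L}=1$ and $J_{\alpha_i}\neq\emptyset$, every join-irreducible of $L$ below $0_{\alpha_i}$ is an atom, and both then use distributivity (join-primeness) to identify $\domin{\alpha_1}\cup\cdots\cup\domin{\alpha_n}$ with the set of join-irreducible elements below $0_{\alpha_1\vee\cdots\vee\alpha_n}$. The only divergence is cosmetic: in part~\eqref{iSfRgc} the paper counts atoms inside the boolean block $\zeta_0$ (via Lemma~\ref{joiNatmszn}), whereas you invoke the standard fact that the length of a finite distributive lattice equals the number of its join-irreducible elements; both routes yield $\length{[0,0_{\alpha_1\vee\cdots\vee\alpha_n}]}=\wxof{0}{0_{\alpha_1\vee\cdots\vee\alpha_n}}$.
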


\begin{proof}Since $0\neq 0_{\alpha}$, part \eqref{iSfRga}  is trivial. 
If $\alpha_1\leq \alpha_2$, then 
$0_{\alpha_1}\leq 0_{\alpha_2}$
by \eqref{blockscompute}, whence 
$\domin{\alpha_1}\subseteq\domin{\alpha_2}$. 
To prove the reverse implication of \eqref{iSfRgb}, assume that $\alpha_1\not\leq\alpha_2$. Then \eqref{blockscompute} implies $0_{\alpha_1}\not\leq 0_{\alpha_2}$, which yields an $x\in\Ji L$ such that $x\leq 0_{\alpha_1}$ but $x\not\leq 0_{\alpha_2}$. 
Since $J_{\alpha_1}\neq\emptyset$ by the assumption,
there is a $y\in\Ji L$ such that $x\leq 0_{\alpha_1}<y$. This together with $\length{\Ji L}=1$ shows that $x\in \At L$. Hence $x\in\domin{\alpha_1}\setminus\domin{\alpha_2}$, proving  part 
\eqref{iSfRgb}.

Next, we claim that
\begin{equation}\label{SlKyMk} \domin{\alpha_1}\cup\ldots\cup\domin{\alpha_n}=\set{x\in\At L:  x\leq 0_{\alpha_1\vee \cdots\vee \alpha_n} }\text.
\end{equation}
The ``$\subseteq$'' inclusion is an evident consequence of \eqref{blockscompute}. To prove the converse inclusion, assume that $x$ belongs to the righthand side of \eqref{SlKyMk}. Then, by \eqref{blockscompute} and distributivity,
\[
x=x\wedge 0_{\alpha_1\vee\cdots\vee \alpha_n}=
x\wedge (0_{\alpha_1}\vee\cdots\vee
0_{\alpha_n})=
(x\wedge 0_{\alpha_1})\vee\cdots\vee
(x\wedge 0_{\alpha_n})\text.
\]
By the join-irreducibility of $x$, there exists an $i\in\set{1,\ldots,n}$ such that $x= x\wedge 0_{\alpha_i}$. Hence $x\leq 0_{\alpha_i}$ implies that $x\in\domin{\alpha_i}$, proving \eqref{SlKyMk}. 

For $i\in\set{1,\ldots,n}$, $0_{\alpha_i}$ is the join of some (possibly only one) join-irreducible elements of $L$. These elements are necessarily atoms since $\length{\Ji L}=1$ and $J_{\alpha_i}\neq\emptyset$. 
Therefore all the $0_{\alpha_i}$ belong to $\zeta_0$, and so does their join, which is $0_{\alpha_1\vee\cdots\vee \alpha_n}$ by \eqref{blockscompute}. Since $\zeta_0$ is a boolean lattice, the number of atoms below $0_{\alpha_1\vee\cdots\vee \alpha_n}$, that is the size of the set given in \eqref{SlKyMk}, is $\length{[0,0_{\alpha_1\vee\cdots\vee \alpha_n}]}=\wxof0 {0_{\alpha_1\vee\cdots\vee \alpha_n}}$. This together with \eqref{SlKyMk} proves part \eqref{iSfRgc}.
\end{proof}

\subsection{A lemma on bipartite graphs}
In order to formulate a statement that we need in the proof of Theorem~\ref{thmmain}\eqref{thmmainb}\kern0.7pt-\eqref{thmmainc}, we have to associate a number-valued function with bipartite graphs.  By a finite \emph{directed bipartite  graph} we shall mean a structure $G=(U,X,E)$ where $U$ and $X$ are finite nonempty sets, referred to as upper and lower vertex sets, and $E\subseteq U\times X$ is an arbitrary relation. The power set, that is the set of all subsets, of $U$ is denoted by $P(U)$, and $P(X)$ has the analogous meaning. Let $\pinf U:=P(U)\cup\set{\infty}$ where $\infty$ is a symbol not in $P(U)$. For $V\in P(U)$, we let $\gd G(V):=\{x\in X:$ there is a $v\in V$ such that $(v,x)\in E\}$. This set is called the set of (lower) vertices \emph{dominated} by $V$. (We shall not use the word ``covered'' in this context since we want to avoid any confusion with the order-theoretic covering relation.) 
We define $\gd G(\infty):=X$. 
Let $\dstar GV$ stand for $|\gd G(V)|$; if $V=\set v$, then we write $\dstar Gv$ rather than $\dstar G{\set v}$. This way $\nopdstar G$, called the \emph{domination function} associated with $G$, is a
$\pinf U\to\mathbb N_0$ mapping. If $\phi\colon U\to U'$ is a bijection and $V\in\pinf U$, then $\phi(V):=\set{\phi(v): v\in V}$ for $V\in P(U)$ while $\phi(\infty):=\infty\in \pinf{U'}$.

\begin{lemma}\label{bpgraphl}
 Let $G=(U,X,E)$ and $G'=(U',X',E')$ be finite directed bipartite graphs. Then these two graphs are isomorphic if{f} there is a bijection $\phi\colon U\to U'$ that preserves the domination function, that is, $\dstar {G'}{\phi(V)}=\dstar GV$ holds for all $V\in\pinf U$.
\end{lemma}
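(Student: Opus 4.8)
The plan is to prove the two implications separately. Throughout, I take an \emph{isomorphism} of directed bipartite graphs to be a pair $(\psi,\chi)$ of bijections $\psi\colon U\to U'$ and $\chi\colon X\to X'$ with $(u,x)\in E\iff(\psi(u),\chi(x))\in E'$. The ``only if'' direction is routine: given such a pair, set $\phi:=\psi$. For each $V\in P(U)$ the bijection $\chi$ maps $\gd G(V)$ onto $\gd{G'}(\psi(V))$, and it maps $\gd G(\infty)=X$ onto $\gd{G'}(\infty)=X'$; taking cardinalities yields $\dstar{G'}{\phi(V)}=\dstar GV$ for every $V\in\pinf U$.

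For the converse, suppose $\phi\colon U\to U'$ preserves the domination function. For $x\in X$ let $T(x):=\set{u\in U:(u,x)\in E}$ be its in-neighborhood, and for $W\in P(U)$ put $g_G(W):=|\set{x\in X:T(x)=W}|$ and $f_G(W):=|\set{x\in X:T(x)\subseteq W}|$, so that $f_G(W)=\sum_{W'\subseteq W}g_G(W')$; define $T'$, $g_{G'}$ and $f_{G'}$ analogously on $G'$. The first key observation is that the domination function determines $f_G$: since $x$ is \emph{not} dominated by $V$ exactly when $T(x)\subseteq U\setminus V$, we get $f_G(W)=|X|-\dstar G{U\setminus W}$, where $|X|=\dstar G\infty$ is read off from the value at $\infty$. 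This is the one place where the symbol $\infty$ is indispensable, since lower vertices with $T(x)=\emptyset$ are invisible to $\nopdstar G$ on $P(U)$ and can only be counted through $|X|$.

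Because $\phi$ is a bijection $U\to U'$, the map $W\mapsto\phi(W)$ is an order-isomorphism $P(U)\to P(U')$ satisfying $\phi(U\setminus W)=U'\setminus\phi(W)$, and the hypothesis at $V=\infty$ gives $|X|=\dstar G\infty=\dstar{G'}{\phi(\infty)}=|X'|$. Hence, for each $W\in P(U)$,
\[
f_{G'}(\phi(W))=|X'|-\dstar{G'}{U'\setminus\phi(W)}=|X|-\dstar G{U\setminus W}=f_G(W).
\]
Now I invert. The M\"obius function of the boolean lattice $P(U)$ sends $(W',W)$ to $(-1)^{|W\setminus W'|}$, so M\"obius inversion of $f_G(W)=\sum_{W'\subseteq W}g_G(W')$ gives $g_G(W)=\sum_{W'\subseteq W}(-1)^{|W\setminus W'|}f_G(W')$. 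Transporting this identity along the order-isomorphism $\phi$ and using $f_{G'}\circ\phi=f_G$ yields $g_{G'}(\phi(W))=g_G(W)$ for all $W\in P(U)$.

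Finally I assemble the bijection $\chi$. For each $W\in P(U)$ the fibres $\set{x\in X:T(x)=W}$ and $\set{x'\in X':T'(x')=\phi(W)}$ have the common cardinality $g_G(W)=g_{G'}(\phi(W))$, so I choose a bijection between them. As $W$ ranges over $P(U)$ these fibres partition $X$, and since $\phi$ maps $P(U)$ onto $P(U')$ their images partition $X'$; amalgamating the chosen bijections produces a single bijection $\chi\colon X\to X'$. For $u\in U$ and $x\in X$ with $T(x)=W$ one has $T'(\chi(x))=\phi(W)$, whence $(u,x)\in E\iff u\in W\iff\phi(u)\in\phi(W)\iff(\phi(u),\chi(x))\in E'$; thus $(\phi,\chi)$ is an isomorphism. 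The main obstacle is the passage from preservation of the cumulative counts $f_G$ to preservation of the exact-type counts $g_G$, carried out by the M\"obius-inversion step; the accompanying subtlety is the role of $\infty$, which is exactly what lets the isolated lower vertices be counted.
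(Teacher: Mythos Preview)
Your proof is correct, and it reaches the same endpoint as the paper's proof---namely, that $\phi$ preserves the ``exact type'' counts $g_G(W)=\bigl|\{x\in X:T(x)=W\}\bigr|$ (what the paper denotes $\nopestar G(W)$), after which the fibre-by-fibre construction of the second bijection is identical---but your route to that endpoint is genuinely different and somewhat slicker. The paper introduces the \emph{strong} domination counts $\nopsstar G(V)=\bigl|\{x:V\subseteq T(x)\}\bigr|$, proves by an inclusion--exclusion induction on $|V|$ that $\phi$ preserves $\nopsstar G$, and then applies a second inclusion--exclusion to pass from $\nopsstar G$ to $\nopestar G$. You instead observe the one-line complement identity $f_G(W)=|X|-\dstar G{U\setminus W}$ for the \emph{downward} cumulative counts $f_G(W)=\bigl|\{x:T(x)\subseteq W\}\bigr|$, which immediately transfers preservation of $\nopdstar G$ to preservation of $f_G$, and then a single M\"obius inversion on the boolean lattice $P(U)$ yields preservation of $g_G$. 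Your approach buys brevity and makes transparent exactly where the value at $\infty$ is needed (to recover $|X|$ and hence count the isolated lower vertices); the paper's approach, going through $\nopsstar G$, is a bit longer but perhaps makes the combinatorics of ``how many lower vertices are hit by every member of $V$'' more explicit.
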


\begin{proof} In order to prove the non-trivial direction of the lemma, assume that $\phi\colon U\to U'$ is a bijection that preserves the domination function. 
We associate two additional mappings with $G$ as follows:
{\allowdisplaybreaks
\begin{align*}
\gs G\colon P(U)\to P(X),\quad V\mapsto\set{x\in X: (&v,x)\in E \text{ for all }v\in V },\cr
\gee G\colon P(U)\to P(X),\quad V\mapsto\{x\in X: (&v,x)\in E \text{ for all }v\in V \text{ and}
\\*
(&u,x)\notin E \text{ for all }u\in U\setminus V\}\text.
\end{align*}%
}%
The corresponding number-valued functions are denoted by 
$\nopsstar G$ and $\nopestar G$, that is, $\sstar GV:=|\gs G(V)|$ and $\estar GV:=|\gee G(V)|$. These functions will be called the \emph{strong domination function} and the \emph{exact domination function}, respectively. 
Replacing $G$ by $G'$, we obtain the definition for $\nopsstar {G'}$ and $\nopestar {G'}$. Usually, we will elaborate our formulas only for $G$ since, sometimes implicitly, we will rely on the fact that the analogous formulas hold for $G'$ as well. 

Firstly, we prove that $\phi$ preserves the strong domination function. Let $V\in P(U)$; we show $\sstar {G'}{\phi(V)} = \sstar GV$ by induction on $|V|$. If $|V|=0$, then 
$\sstar G\emptyset=|X|=\dstar G{\infty}$, and the desired equality follows easily. The case $|V|=1$, say $V=\set{v}$, is even easier since $\sstar Gv=\dstar Gv$. Next, assume that $1<n\in\mathbb N$, $V=\set{v_1,\ldots,v_n}$ is an $n$-element subset of $U$, and the desired equality holds for all subsets of $U$ with less than $n$ elements. 
Based on the inclusion-exclusion principle, also called (logical) sieve formula, 
\[|T_1\cup\cdots\cup T_n| =   \sum_{i=1}^{n}(-1)^{i-1}
\sum_{\substack{I\subseteq \set{1,\ldots,n}\\|I|=i}}\,\Bigl|\bigcap_{j\in I} T_j)\Bigr|,
\]
and 
using that $\gs G$ agrees with $\gd G$ on singleton sets and satisfies the identity $\gs G(V_1)\cap \gs G(V_2)= \gs G(V_1\cup V_2)$, we can compute as follows.
{\allowdisplaybreaks
\begin{align*}
\dstar GV&=|\gd G(V)|=|\gd G(v_1)\cup\cdots\cup \gd G(v_n)|  
=|\gs G(v_1)\cup\cdots\cup \gs G(v_n)|  \cr
&=(-1)^{n-1} \Bigl|\bigcap_{j=1}^n \gs G(v_j)\Bigr|+ 
\sum_{i=1}^{n-1}(-1)^{i-1}
\sum_{\substack{I\subseteq \set{1,\ldots,n}\\|I|=i}}\Bigl|\bigcap_{j\in I}\gs G(v_j)\Bigr|\cr
&=(-1)^{n-1} \sstar GV+ 
\sum_{i=1}^{n-1}(-1)^{i-1}
\sum_{\substack{I\subseteq \set{1,\ldots,n}\\|I|=i}}
\sstar G{\set{v_j:j\in I }}\text.
\end{align*}
}%
Now,  $\phi$ preserves all summands but $(-1)^{n-1} \sstar GV$ in the previous line  by the induction hypothesis. Since  $\dstar GV$ is also preserved, we conclude that $\sstar GV$ is preserved either, completing the induction.
Thus, $\phi$ preserves the strong domination function.

Next, to show that $\phi$ preserves the exact domination function, let $V\in P(U)$. We want to show that $\estar{G'}{\phi(V)}=\estar GV$. This is clear if $V=U$ since 
$\estar GU=\sstar GU$ and the strong domination function is preserved. Hence we can assume that $V\neq U$. Let $W:=U\setminus V$. It is a $k$-element set for some $k\in\mathbb N$, so we can write $W=\set{w_1,\ldots,w_k}$. For $i=1,\ldots, k$, let $A_i:=\set{x\in \gs G(V): (w_i,x)\in E}$. Notice that $A_i=\gs G(V\cup \set{w_i})$.
Using the inclusion-exclusion principle again, we obtain that
{\allowdisplaybreaks
\begin{align*}
\estar GV &=|\gee G(V)|=\Bigl|\gs G(V)\setminus\bigcup_{i=1}^k A_i\Bigr| =
|\gs G(V)| - \Bigl|\bigcup_{i=1}^k A_i\Bigr|\cr
&=|\gs G(V)| + \sum_{i=1}^{k}(-1)^i
\sum_{\substack{I\subseteq \set{1,\ldots,k}\\|I|=i}}\Bigl| \bigcap_{j\in I}A_j\Bigr| \cr
&=\sstar GV + \sum_{i=1}^{k}(-1)^i
\sum_{\substack{I\subseteq \set{1,\ldots,k}\\|I|=i}} \Bigl|\gs G\bigl(V\cup\set{w_j:j\in I}\bigr)\Bigr|  \cr
&=\sstar GV + \sum_{i=1}^{k}(-1)^i
\sum_{\substack{I\subseteq \set{1,\ldots,k}\\|I|=i}} \sstar G { V\cup\set{w_j:j\in I}\bigr}\text. 
\end{align*}
}%
Therefore, since $\phi$ preserves the strong domination function, it preserves the exact domination function either. 

Now, we are ready to define an isomorphism $(\phi,\xi)\colon G\to G'$. That is, $\phi$ was originally given, and we intend to define a bijection $\xi\colon X\to X'$ such that $E'=\set{(\phi(u),\xi(x)): (u,x)\in E}$. Clearly, 
\begin{equation}\label{stpsisbEwd}
\gee G(V_1)\cap \gee G(V_2)=\emptyset \text{ for all } V_1\neq V_2\in P(U), \text{ and } X=\bigcup_{V\in P(U)}\gee G(V)\text;
\end{equation}
and the analogous assertion holds for $G'$.
Notice at this point that the elements of $X$ with degree 0 belong to $\gee G(\emptyset)$.  
For each $V\in P(U)$, let us fix a bijection $\xi_V\colon \gee G(V)\to\gee{G'}({\phi(V)})$; this is possible since $\estar GV=  \estar{G'}{\phi(V)}$. Then $\xi:=\bigcup_{V\in P(U)}\xi_V$ is an $X\to X'$ bijection by \eqref{stpsisbEwd}. 

Observe that the role of   $(G,\phi,\xi)$ and that of $(G',\phi^{-1},\xi^{-1})$ can be interchanged. Hence, in order to prove that $(\phi,\xi)$ is  an isomorphism, it suffices to show that $(\phi,\xi)$ sends edges to edges. To do so, assume that $(u,x)\in E$. Let $V:=\set{v\in U: (v,x)\in E}$. Then $u\in V$ and $x\in\gee G(V)$. Hence $\phi(u)\in\phi(V)$ and 
 $\xi(x)=\xi_V(x)\in  \gee{G'}({\phi(V)})$.
By the definition of $\gee{G'}$, this implies that $(\phi(u),\xi(x))$ belongs to $E'$, as desired.
\end{proof}

\subsection{The end of the proof}
Based on the auxiliary statements given so far, we are now in the position to complete the proof of the main result.
\begin{proof}[Proof of Theorem~\ref{thmmain} \eqref{thmmainb} and \eqref{thmmainc}]
Assume that $L$ and $L'$ are finite distributive lattices and 
\[(\psi,\kappa)\colon 
\bigl(\dskel L,\leq , \skel L,\eta_0,\eta_1,\wt\bigr) \to \bigl(\dskel {L'},\leq' , \skel {L'},\eta'_0,\eta'_1,\wt'\bigr)
\]
is an isomorphism between their weighted double skeletons.
Remember that  $\eta_0$ and $\eta_1$ were defined right after \eqref{dkselL},  the meaning of  $\eta_0'$ and $\eta_1'$ is analogous, and the diagram
\begin{equation}\label{cMutes}
\begin{aligned}
\includegraphics[scale=1.0]{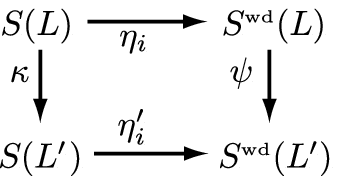}
\end{aligned}
\end{equation}
commutes for  $i\in\set{0,1}$.
Observe that
\begin{equation}\label{boThmxone}
\length{\Ji {L}}\leq 1 \quad\text{and}\quad \length{\Ji {L'}}\leq 
1\text.
\end{equation} 
Indeed, in case of  part \eqref{thmmainc} this is assumed.
In case of part \eqref{thmmainb}, the assumption together with 
the meaning of $\kappa$  implies that $L'$ is also $H^2$-irreducible, whence 
$\length{\Ji {L}}\leq 1$ and $\length{\Ji {L'}}\leq 1$ follow from  part \eqref{thmmaina}.

Firstly, assume that $|\skel L|=1$. Then $|\skel {L'}|= |\kappa(\skel L)| =1$.
It is well-known, and follows from Lemmas~\ref{joiNatmszn}  and \ref{kHhDwlP}\eqref{kHhDwlPc}, that  $L$ is boolean if{f} $1_L=\bigvee\At L$. Therefore, we obtain from Lemma~\ref{joiNatmszn} that $|\skel L|=1$ if{f} $L$ is boolean if{f} $\length{\Ji L}=0$, and the same holds for $L'$. Therefore 
\[\length L=\wxof {0_L}{1_L} = \wxofprime{\psi(0_L)}{\psi(1_L)}=  \wxofprime{0_{L'}}{1_{L'}} =  \length {L'},
\]
and we conclude that $L\cong L'$. Observe that the role of $L$ and that of $L'$ in the above argument can be interchanged, whence we also conclude that $\length{\Ji L}=0$ if{f} $\length{\Ji {L'}}=0$.

In the rest of the proof, we assume that $\length{\Ji L}=1$. Then, by the previous paragraph and \eqref{boThmxone}, $\length{\Ji {L'}}=1$ also holds.  We are going to define some auxiliary sets and structures associated with $\dskel L$; their ``primed'' counterparts associated with $\dskel {L'}$ are understood analogously.

Let $U:=\Jtop L$, $X:=\At L$, and $E:=\set{(u,x)\in U\times X: u>x}$. 
Notice that none of $U$, $X$ and $E$  is empty since $\length{\Ji L}=1$.   
Obviously, the directed bipartite graph $G:=(U,X,E)$ determines the poset $\Ji L$. Therefore, $G$ and $G'$  determine $L$ and $L'$, respectively, up to isomorphism. Consequently, by Lemma~\ref{bpgraphl}, it suffices to find a bijection
$\phi: U\to U'$ such that 
\begin{equation}\label{TFtarget}
\dstar {G'}{\phi(V)}=\dstar GV \text{ holds for all }  V\in \pinf  U\text.
\end{equation}
It follows from the commutativity of \eqref{cMutes} that, for $\gamma\in\skel L$,
\begin{equation}\label{psiNsupha}
\psi(0_\gamma)=\psi(\eta_0(\gamma))=
\eta_0'(\kappa(\gamma))=0_{\kappa(\gamma)}
\text{ and, similarly, }\psi(1_\gamma)=1_{\kappa(\gamma)}.
\end{equation}
For $\alpha\in \upskel L$, Lemma~\ref{nojiralpha} and \eqref{psiNsupha} imply  that  
{\allowdisplaybreaks
\begin{align*}
|J_\alpha| &= \sum_{\beta \leq \alpha, \,\, \beta \in \skel L} \mu_{\skel L}(\beta , \alpha) \cdot \wxof {0_\alpha} {1_\beta} \cr
&= \sum_{\beta \leq \alpha, \,\, \beta \in \skel L} \mu_{\skel {L'}}(\kappa(\beta) , \kappa(\alpha)) \cdot \wxofprime {\psi(0_\alpha)} {\psi(1_\beta)}  \cr
&= \sum_{\beta \leq \alpha, \,\, \beta \in \skel L} \mu_{\skel {L'}}(\kappa(\beta) , \kappa(\alpha)) \cdot \wxofprime {0_{\kappa(\alpha)}} {1_{\kappa(\beta)}}  \cr
&= \sum_{\beta' \leq \kappa(\alpha), \,\, \beta' \in \skel {L'}} \mu_{\skel {L'}}(\beta' , \kappa(\alpha)) \cdot \wxofprime {0_{\kappa(\alpha)}} {1_{\beta'}} =  |J_{\kappa(\alpha)}|\text. 
\end{align*}%
}%
This allows us to fix a bijection $\phi_\alpha: J_\alpha\to  J_{\kappa(\alpha)}$. (Notice that if $J_\alpha$ happens to be empty, then $\phi_\alpha=\emptyset$ is the empty mapping.)
Let $\phi$ be the union of all these $\phi_\alpha$, $\alpha \in\upskel L$. It follows from Lemma~\ref{kHhDwlP}\eqref{kHhDwlPb} that $\phi$ is a mapping.
Since the union of the corresponding $J_\alpha$, $\alpha\in\upskel L$, is $\Jtop L=U$ by Lemma~\ref{kHhDwlP}\eqref{kHhDwlPc},  and the analogous assertion holds for $U'$, $\phi\colon U\to U'$ is a bijective mapping.

It follows from Lemma~\ref{kHhDwlP}\eqref{kHhDwlPb}\kern0.7pt -\eqref{kHhDwlPc} that for each $u\in U$, there is a unique $\alpha(u)\in \upskel L$ such that $u\in J_{\alpha(u)}$. Similarly, for each $u'\in U'$, there is a unique $\alpha'(u')\in \upskel {L'}$ such that $u'\in J_{\alpha'(u')}$. The definition of $\phi$ implies that 
\begin{equation}\label{uOEWkH}
\alpha'\bigl(\phi(u)\bigr) = \kappa\bigl(\alpha(u)\bigr),\quad\text{for all}\quad u\in U\text.
\end{equation}

Assume that $u\in U$.  Then $u$ is not an atom of $L$, and its only lower cover in $L$ is $0_{\alpha(u)}$. Hence, for any $a\in \At L$, we have that $u>a \iff 0_{\alpha(u)}\geq a \iff a\in\domin {\alpha(u)}$. 
This yields that, for any $V\in P(U)$,
$\gd G(V) =\bigcup_{u\in V}\domin{\alpha(u)}
$. 
Therefore, taking the meaning of $\eta_0$ into account and using Lemma~\ref{iSfRg}, 
\begin{equation}\label{WGvztsZ}
\dstar GV=\Bigwxof {\eta_0(\zeta_0)}{\eta_0\bigl(\bigvee_{u\in V}\alpha(u) \bigr)} \text.
\end{equation}
Indicating the referenced formulas or their ``primed version'' at the equation signs and using that $( \psi,\kappa)$ preserves the extended weight function, 
we obtain that
{\allowdisplaybreaks
\begin{align*}
\dstar {G'}{\phi(V)}
&\ekval{(\ref{WGvztsZ})'}  \Bigwxofprime {\eta_0'(\zeta'_0)}{\eta_0'\bigl(\bigvee_{u\in V}\alpha'(\phi(u)) \bigr)} \cr
&\ekval{\eqref{uOEWkH}} \Bigwxofprime {\eta_0'(\kappa(\zeta_0))} {\eta_0'\bigl(\bigvee_{u\in V}\kappa(\alpha(u)) \bigr)}   \cr
&= \Bigwxofprime {\eta_0'(\kappa(\zeta_0))} {\eta_0'\Bigl(\kappa\bigl(\bigvee_{u\in V}\alpha(u)\bigr) \Bigr)} \cr
&\ekval{\eqref{cMutes}} \Bigwxofprime {\psi(\eta_0(\zeta_0))}{\psi\Bigl(\eta_0\bigl(\bigvee_{u\in V}\alpha(u)\bigr) \Bigr)} \cr
&=  \Bigwxof {\eta_0(\zeta_0)}{\eta_0\bigl(\bigvee_{u\in V}\alpha(u) \bigr)} \ekval{\eqref{WGvztsZ}} \dstar GV\text. 
\end{align*}
}%
This proves \eqref{TFtarget} for $V\in P(U)$.

We are left with the case $V=\infty\in\pinf U$. Then, using Lemma~\ref{kHhDwlP}\eqref{kHhDwlPc}, the validity of \eqref{TFtarget} is obtained as follows:
{\allowdisplaybreaks
\begin{align*}
\dstar G{\infty}&=|\At L|=|\At{\zeta_0}|= \wxof{0_{\zeta_0}} {1_{\zeta_0}} 
= \wxofprime{\psi(0_{\zeta_0})} {\psi(1_{\zeta_0})} \cr 
& \ekval{\eqref{psiNsupha}}
\wxofprime{0_{\kappa(\zeta_0)}} {1_{\kappa(\zeta_0)}}    
 = \wxofprime{0_{\zeta_0'}} {1_{\zeta_0'}}  \cr
&= |\At{\zeta_0'}|=|\At {L'}|=\dstar {G'}{\infty}= \dstar {G'}{\phi(\infty)}\text. \qedhere
\end{align*}%
}%
\end{proof}

\end{document}